\documentclass[a4paper,11pt]{scrartcl}

\usepackage{graphicx}
\usepackage[utf8]{inputenc}
\usepackage[english]{babel}
\usepackage{amsmath,amssymb,enumerate,url,tabularx,stmaryrd,mathrsfs}

\usepackage{todonotes,comment,afterpage}
\usepackage{subcaption}
\usepackage{framed}
\usepackage{mathtools}
\usepackage{fullpage}
\usepackage[title]{appendix}
\usepackage{nicefrac}

\usepackage{amsthm}
\usepackage{graphicx}
\usepackage{hyperref}

\theoremstyle{plain}
\newtheorem{theorem}{Theorem}[section]
\newtheorem{lemma}[theorem]{Lemma}
\newtheorem{corollary}[theorem]{Corollary}
\newtheorem{proposition}[theorem]{Proposition}

\newtheorem{remark}[theorem]{Remark}

\numberwithin{equation}{section}
\numberwithin{figure}{section}
\numberwithin{table}{section}

\newcommand{\R}{\mathbb{R}}

\newcommand{\N}{\mathbb{N}}

\newcommand{\abs}[1]{\left|#1\right|}
\newcommand{\norm}[1]{\left\|#1\right\|}

\newcommand{\support}{\operatorname{supp}}

\def\UU{\mathcal{U}}
\def\VV{\mathcal{V}}
\def\YY{\mathcal{Y}}
\def\WW{\mathcal{W}}

\def\fdiv{\operatorname{div}}

\def\LL{\mathcal{L}}

\newcommand{\extension}{\mathcal{E}}

\def\HHwr{H^1_{\rho}(y^\alpha,\R^d\times\R^+)}
\def\HHwry{H^1_{\rho}(y^\alpha,\R^d \times (0,\YY))}

\def\HH{\mathbb{H}}

\def\trace{\mathrm{tr_0}}

\usepackage{tikz}
\usetikzlibrary{shapes}
\usepackage{pgfplots}
\usetikzlibrary{calc}

\usetikzlibrary{positioning}
\usetikzlibrary{arrows}
\usetikzlibrary{calc}
\usetikzlibrary{intersections, pgfplots.fillbetween,patterns}
\usepgfplotslibrary{colorbrewer}

\iftrue 

\usetikzlibrary{external}
\tikzexternalize[prefix=figures_pdf/]
\tikzexternaldisable

\else

\fi

\setlength\parindent{0pt}

\title{Fractional Diffusion in the full space: decay and regularity}

\author{Markus Faustmann\footnote{
Institute for Analysis and Scientific Computing, TU Wien, Vienna, Austria,
\texttt{markus.faustmann@tuwien.ac.at}} 
\, and Alexander Rieder\footnote{
Institute for Analysis and Scientific Computing, TU Wien, Vienna, Austria,
\texttt{alexander.rieder@tuwien.ac.at}}}
\date{\today}

\begin{document}
\maketitle
\begin{abstract}
We consider fractional partial differential equations posed on the full space $\R^d$. Using the well-known Caffarelli-Silvestre extension to $\R^d \times \R^+$ as equivalent definition, we derive existence and uniqueness of weak solutions. We show that solutions to a truncated extension problem on $\R^d \times (0,\YY)$ converge to the solution of the original problem as $\YY \rightarrow \infty$. Moreover, we also provide an algebraic rate of decay and derive weighted analytic-type regularity estimates for solutions to the truncated problem.
 These results pave the way for a rigorous analysis of numerical methods for the full space problem, such as FEM-BEM coupling techniques.  
 
\end{abstract}

\section{Introduction}
In recent years, models using non-integer powers of differential operators garnered lots of interest as the inherent non-locality of these operators gives a more accurate way to describe non-local processes in physics, finance or image processing,  \cite{bv16,collection_of_applications}.
Restricting these non-local PDE models to some bounded domain requires one to fix values of the solution everywhere outside of the domain, which may lead to some non-physical assumptions for the boundary conditions.  Consequently, the full-space problem is oftentimes used in analytical works.

In a similar vein, when working on a bounded domain, there are multiple non-equivalent definitions of fractional differential operators such as the fractional Laplacian, \cite{whatis}. The most common ones are the integral fractional Laplacian (defined pointwise as a singular integral) and the spectral fractional Laplacian (defined using spectral calculus). Consequently, 
 it is oftentimes not obvious, which definition of the fractional Laplacian should be used in the model. In contrast, working on the full space, one obtains a single natural definition as  all different approaches are equivalent, \cite{tendef}.

In this work, we analyze fractional PDEs in the full space. Using the  influential interpretation of elliptic fractional differential operators as Dirichlet-to-Neumann operators for degenerate elliptic PDEs, the so called Caffarelli-Silvestre extension, \cite{caffarelli_silvestre,stinga_torrea}, defined on the half space $\R^d \times \R^+$, we show well-posedness of a weak formulation of the fractional PDE. As, in general, analytical solutions to such problems are unknown, discretizations of the equations are usually employed to derive approximative solutions.  

In the case of fractional PDEs on bounded domains $\Omega \subset \R^d$, see e.g. \cite{NOS15,tensor_fem}, a truncation to $\Omega \times (0,\YY)$ is used to be able to discretize the extension problem. This induces two natural questions: does the solution to the truncated extension problem (with homogeneous Neumann condition on the artificial boundary) converge to the solution of the original problem and can the rate of convergence be quantified? 
For bounded domains, \cite{tensor_fem} answered both questions by showing exponential decay in $\YY$ by exploiting an explicit representation for the $y$-dependence. 

In this article, we employ the truncation to the full space problem, i.e., we study the extension problem on $\R^d \times (0,\YY)$ and answer both questions as well. In this case, however, 
there is no closed form expression for the $y$-dependence available. Nonetheless, we show convergence of the truncated solution to the original solution in the full-space setting, but only with certain algebraic rates. From a technical standpoint, the explicit representation is replaced by applying purely variational techniques to show the decay properties.

\subsection{Impact on numerical methods}

Numerical methods for fractional PDEs on bounded domains are fairly developed, as can e.g. be seen in the survey articles
\cite{bbnos18,EDGGTZ20,whatis} and we especially mention approximations based on the finite element method (FEM), \cite{ab17,tensor_fem,abh19,fmk22}. 
A key limitation to the FEM is the restriction to bounded computational domains. A classical reformulation for exterior problems uses boundary integral equations, which leads to the boundary element method (BEM), \cite{sauter_schwab}.
An approach for transmission problems on unbounded domains that is commonly employed is the combination of both methods, so called FEM-BEM couplings, \cite{costabel88a,han90}.
The goal of our follow-up work, \cite{FR23}, is to formulate a fully computable symmetric FEM-BEM coupling method applied to fractional transmission problems posed in $\R^d$.

However, before a rigorous analysis of any numerical method can be made, analytical foundations regarding well-posedness and regularity of the problem at hand must be made. 
As a second key result of this article, we establish analyticity of the solution in the extended direction $y$ in terms of certain weighted Sobolev spaces. This is achieved by deriving a small initial regularity shift in a weighted space and then employing bootstrapping arguments to control higher-order derivatives. Structurally, these estimates are similar to the ones for the case of bounded domains in  \cite{tensor_fem,fmms21} and show that solutions are in certain countably normed spaces.

 Combined with our follow-up work \cite{FR23}, this article establishes that the Caffarelli-Silvestre extension approach
  can be combined with FEM-BEM coupling techniques to yield a good approximation
  scheme.

\subsection{Layout}

The present paper is structured as follows: In Section~2, we introduce our model problem and formulate assumptions on the data to be able to apply FEM-BEM techniques afterwards. Then, the Caffarelli-Silvestre extension as well as its weak formulation and the weak formulation of the truncated problem are introduced. Finally, we present our main results: Proposition~\ref{prop:discrete_well_posedness} shows well-posedness of both weak formulations, Proposition~\ref{lemma:truncation_convergence} provides convergence of the truncated solution to the solution posed on $\R^d \times \R^+$ and Proposition~\ref{lemma:truncation_error} gives the algebraic rate of decay. In Proposition~\ref{prop:regularity} the regularity results
in weighted Sobolev spaces are presented.

Section~3 is then devoted to the proofs of the well-posedness and convergence results, where the key step is Lemma~\ref{lemma:stronger_decay}, which shows decay properties of the full space solution as the truncation parameter $\YY \rightarrow \infty$ by employing inf-sup theory and weighted spaces. 

Finally, in Section~4 the estimates for higher order derivatives are derived. Hereby, an initial regularity shift in Lemma~\ref{lemma:initial_shift} and Lemma~\ref{lemma:apriori_decay_l2} allows to use an induction argument to show Proposition~\ref{prop:regularity}. Moreover, a (finite) regularity result in the non-extended variables and a characterization of the solution in certain countably normed spaces is presented.

\subsection{Notations}
Throughout the text we use the symbol $a \lesssim b$ meaning that $a \leq Cb$ with a generic constant $C>0$ that is independent of any crucial quantities in the analysis. Moreover, we write $\simeq$ to indicate that both estimates $\lesssim$ and $\gtrsim$ hold.

For any multi index $\alpha = (\alpha_1,\dots,\alpha_d)\in \mathbb{N}^d_0$, we
denote the partial derivative $\partial^\alpha = \partial^{\alpha_1}_{x_1}\cdots \partial^{\alpha_d}_{x_d}$ of order
$|\alpha|= \sum_{i=1}^d\alpha_i$.
Moreover, for $k \in \N$, we employ classical integer order Sobolev spaces $H^k(\Omega)$ on (bounded) Lipschitz domains $\Omega$ and the fractional Sobolev spaces $H^t(\R^d)$ for $t \in (0,1)$ defined, e.g., via Fourier transformation.

\section{Main results}
\subsection{Model problem}
We consider a stationary fractional diffusion problem on the full space $\R^d$ with $d=2$ or $d=3$ given by
\begin{align}\label{eq:stat_model_prob}
  \LL^{\beta} u + s u &= f \;  \qquad \text{in $\R^d$} 
\end{align}
with $s \geq 0$,  and $\beta \in (0,1)$.
The self-adjoint operator $\LL$ is hereby defined as
\begin{align*}
  \LL u := -\fdiv\big( \mathfrak{A} \nabla u\big),
\end{align*}
and, for functions $u \in L^2(\R^d)$, the fractional differential operator $\LL^{\beta}$ is defined using spectral calculus
$$
\LL^{\beta} u:=\int_{\sigma(\LL)} z^\beta dE \; u,
$$
where $E$ is the spectral measure of $\LL$ and $\sigma(\LL)$ is the spectrum of $\LL$.
Using standard techniques this definition can be extended to tempered distributions.

For the data, we assume 
that $\mathfrak{A}: \R^d \rightarrow \R^{d\times d}$ is smooth and pointwise symmetric and positive definite in the sense that there exists $\mathfrak{A}_0 >0$ such that
\begin{align*}
(\mathfrak{A}(x) y,y)_2 \geq \mathfrak{A}_0 \norm{y}_2^2 \qquad \forall y \in \R^d.
\end{align*}
In order to avoid several additional difficulties due to decay conditions at infinity, we assume $s \geq\sigma_0>0$ for the case $d=2$. 
\bigskip

Additionally, we make the following  assumptions on the coefficients in the model problem: There exists a bounded Lipschitz domain $\Omega \subseteq \R^d$ such that
\begin{enumerate}
\item $\support f \subseteq \Omega$,
\item  $\mathfrak{A} \equiv I$ in $\R^d\setminus \overline{\Omega}$.
\end{enumerate}

\begin{remark}
 We note that adding lower order terms to the operator is also covered by our techniques, i.e., 
 \begin{align*}
  \LL u := -\fdiv\big( \mathfrak{A} \nabla u\big) + \mathfrak{c} u,
\end{align*}
where $\mathfrak{c}: \R^d \rightarrow \R$ with $\mathfrak{c} \geq 0$ is smooth and satisfies $\mathfrak{c} \equiv \mathfrak{c}_0 \in \R$
  in $\R^d\setminus \overline{\Omega}$.
  However, in order to make the key concepts more clear, we decided to stick to the case $\mathfrak{c} = 0$ in the following.
\end{remark}

\subsection{The Caffarelli-Silvestre extension}
Following~\cite{stinga_torrea}, we rewrite~\eqref{eq:stat_model_prob} as an extension problem in a half space in $\R^{d+1}$. The extension problem is conveniently described using weighted Sobolev spaces. 

For any bounded open subset
$D\subset \R^d\times \R$ and arbitrary $\alpha \in (-1,1)$, we define $L^2(y^\alpha,D)$ as the space of square integrable functions with respect to the weight $y^\alpha$. Correspondingly, the Sobolev space   $H^1(y^\alpha,D) \subset L^2(y^\alpha,D)$ consists of functions, for which the norm
$$
\norm{\UU}_{H^1(y^\alpha,D)}^2 := \int\int_{D}y^\alpha\Big( \big|\nabla \UU(x,y)\big|^2 
  +\big| \UU(x,y)\big|^2 \Big)\,dx\,dy
$$ 
is finite. 

As our model problem is formulated on an \emph{unbounded} domain, we need to take care of the behaviour at infinity.
To that end, we use appropriately
  weighted Sobolev spaces, as is standard for the Poisson problem, see e.g. \cite{AGG94}.
For $(x,y) \in \R^d\times \R$, we introduce the weight
\begin{align*}
\rho(x,y):=(1+\abs{x}^2+\abs{y}^2)^{1/2}. 
\end{align*}
For a (possibly unbounded) domain $D \subset \R^d \times \R^+$, we define the 
space $H^1_{\rho}(y^\alpha,D)$ as the space of all square integrable functions $\UU$  (with respect to the weight function $y^\alpha \rho^{-2}$) such that the norm
\begin{align}\label{eq:normHHw}
  \norm{\UU}_{H^1_{\rho}(y^\alpha,D)}^2 :=
  \int\int_{D}{y^\alpha \Big( \big|\nabla \UU(x,y)\big|^2
  + \rho(x,y)^{-2} \big| \UU(x,y)\big|^2 \Big)\,dx\,dy}
\end{align}
is finite. 
Commonly used cases are $D=\R^d\times \R^+$ (full space), $D = \R^d \times (0,\YY)$ for $\YY>0$ (corresponding to truncation in $y$-direction), or $D=\omega \times (0,\YY)$ 
for $\omega \subset \R^d$ and $\YY >0$.
%

\begin{remark}
 For bounded sets $\omega \subset \R^d$ and $\YY < \infty$, we sometimes use the weighted spaces $H^1_{\rho}(y^\alpha, \omega\times (0,\YY))$, noting that, in this case, the weight satisfies $1 \leq \rho(x,y) \leq C(\omega,\YY) < \infty$. Consequently, the norm \eqref{eq:normHHw} defines an equivalent norm to the $H^1(y^\alpha,\omega\times (0,\YY))$-norm.
\end{remark}

For functions $\UU \in \HHwr$, one can give meaning to their trace at $y=0$, which we denote by $\operatorname*{tr}_0\UU$. In fact, Lemma~\ref{lem:trace} will show that $\operatorname*{tr}_0\UU$ is in a weighted fractional Sobolev space.

Then, the extension problem reads as: find $\UU \in \HHwr$ such that
  \begin{subequations} \label{eq:extension}
\begin{align}
 - \fdiv\big(y^\alpha \mathfrak{A}_x \nabla \UU\big) &= 0 \qquad \text{in $\R^{d} \times \R^+$},\\
  d^{-1}_{\beta}\partial_{\nu^\alpha} \UU +s \trace\UU&= f \qquad \text{in $\R^{d}$},
\end{align}
  \end{subequations} 
where $d_{\beta}:=2^{1-2\beta}\Gamma(1-\beta)/\Gamma(\beta)$, $\alpha:=1-2\beta \in (-1,1)$, $\partial_{\nu^\alpha} \UU(x) := -\lim_{y\rightarrow 0} y^\alpha \partial_y \UU(x,y)$, and $\mathfrak{A}_x = \begin{pmatrix} \mathfrak{A} & 0 \\ 0 & 1 \end{pmatrix} \in \R^{(d+1) \times (d+1)}$.
Then, by \cite{stinga_torrea}, the solution to \eqref{eq:stat_model_prob} is given by $u = \UU(\cdot,0)$.
\bigskip

The weak formulation of \eqref{eq:extension} in $\HHwr$ reads as finding $\UU \in \HHwr$ such that
\begin{align}\label{eq:weakform}
  A(\UU,\VV)&:=
  \int_{0}^{\infty}{y^\alpha \int_{\R^d}\mathfrak{A}_x(x)\nabla \UU \cdot \nabla \VV \; dx dy} + s d_{\beta} \int_{\R^d}{\trace \UU \trace \VV\,dx} = d_{\beta}(f,\trace \VV)_{L^2(\R^d)}
  \end{align}
  for all $\VV \in \HHwr$. 
    If $s>0$, it is natural to include the trace term into the norm.
    Thus, we introduce:
    \begin{align*}
      \|\UU\|^2_{\HH}:= \|\UU\|^2_{\HHwr} + s \|\trace \UU\|^2_{L^2(\R^d)}.
    \end{align*}
  The first step towards a computable formulation, before even considering any
  discretization steps, is to cut the problem from the infinite cylinder $\R^d \times \R^+$ to a finite
cylinder in the $y$-direction. To do so, we fix a parameter $\YY >0$ to be chosen later and
introduce the truncated bilinear form
\begin{align*}
    {A}^{\YY}(\UU,\VV)&:=
  \int_{0}^{\YY}{y^\alpha \int_{\R^d}{\nabla \UU \cdot \nabla \VV} \;dx dy} + s d_{\beta} \int_{\R^d}{\trace \UU \trace \VV\,dx}.
\end{align*}

The truncated problem then reads: Find
$ \UU^{\YY} \in H^1_\rho(y^\alpha,\R^d\times (0,\YY))$  such that
\begin{align}\label{eq:truncated_BLF_eq}
  A^{\YY}(\UU^\YY,\VV^\YY)
  =d_\beta\big(f,\trace{\VV^\YY}\big)_{L^2(\R^d)} \quad 
  \text{for all } \VV^\YY \in H^1_\rho(y^\alpha,\R^d\times (0,\YY)).
\end{align}
In the following, we will often take $\YY \in (0,\infty]$ and refer to solutions to problem \eqref{eq:truncated_BLF_eq}, meaning that in the case $\YY = \infty$ these functions actually satisfy \eqref{eq:weakform}.

We also introduce a natural norm on the truncated cylinder:
\begin{align*}
  \|\UU\|^2_{\HH^\YY}:= \|\UU\|^2_{\HHwry} + s \|\trace \UU\|_{L^2(\R^d)}^2.
\end{align*}

 In fact, the truncated problem \eqref{eq:truncated_BLF_eq} corresponds to a weak formulation of a Caffarelli-Silvestre extension problem with an additional Neumann boundary condition at $y=\YY$: 
   \begin{subequations}    \label{eq:truncated_problem_pw}
  \begin{align}
    -\fdiv\big(y^\alpha \mathfrak{A}_x \nabla {\UU}^\YY\big) &= 0  &&\text{in $\R^{d} \times (0,\YY)$},\\
    d_{\beta}^{-1}\partial_{\nu^\alpha} {\UU}^\YY + s \trace{\UU}^\YY&=f &&\text{on $\R^{d} \times \{0\}$} , \\
    \partial_{y} {\UU}^\YY&=0 &&\text{on $\R^{d} \times \{\YY\}$}.
  \end{align}
  \end{subequations} 

\subsection{Main results}
We are now in position to formulate the main results of the article. The proofs of the statements are relegated to the following Sections ~\ref{sec:well-posedness} and \ref{sec:regularity}.
\subsubsection{Well-posedness and decay}
Regarding well-posedness of our variational formulation, we have the following proposition.
\begin{proposition}
  \label{prop:discrete_well_posedness}
  Assume that either $d>2$ or $s>0$.
  Then, problem \eqref{eq:weakform} has a unique solution $\UU \in \HHwr$ and there is a constant $C>0$ such that
  \begin{align*}
    \norm{\UU}_{\HH} &\leq C \min(1, s^{-1}) \norm{f}_{L^2(\Omega)}.
  \end{align*} 
  Fix $\YY \in (0,\infty)$. Then, the truncated problem \eqref{eq:truncated_BLF_eq} has a unique solution $\UU^{\YY}\in \HHwry$ satisfying
    \begin{align*}
    \norm{\UU^{\YY}}_{\HH_{\YY}} &\leq C \left(1+\frac{1}{\YY}\right) \min(1, s^{-1}) \norm{f}_{L^2(\Omega)}
  \end{align*}
with a constant $C>0$ independent of $\YY$.

  Moreover, the bilinear forms in~\eqref{eq:weakform} and \eqref{eq:truncated_BLF_eq} are coercive.
\end{proposition}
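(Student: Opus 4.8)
The plan is to establish well-posedness via the Lax–Milgram lemma (or its variational counterpart), so the crux is coercivity of the bilinear forms $A$ and $A^\YY$ on the respective weighted spaces $\HH$ and $\HH^\YY$, followed by boundedness of the right-hand side functional. For the coercivity step I would exploit two ingredients: the uniform ellipticity of $\mathfrak{A}_x$ (which is immediate from the assumption $(\mathfrak{A}(x)y,y)_2 \geq \mathfrak{A}_0\norm{y}_2^2$ together with the $1$ in the bottom-right corner), giving
\begin{align*}
  A(\UU,\UU) \gtrsim \int_0^\infty y^\alpha\int_{\R^d}\abs{\nabla\UU}^2\,dx\,dy + s\,d_\beta\norm{\trace\UU}_{L^2(\R^d)}^2,
\end{align*}
and then a Hardy-type / weighted Poincaré inequality to control the zeroth-order part $\int\int y^\alpha\rho^{-2}\abs{\UU}^2$ by the gradient term. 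This is the standard mechanism that makes the $\rho$-weighted spaces the correct functional-analytic setting: on $\R^d\times\R^+$ with $d>2$, the weighted Hardy inequality $\int\int y^\alpha \rho^{-2}\abs{\UU}^2 \lesssim \int\int y^\alpha\abs{\nabla\UU}^2$ holds without a trace term; when $d=2$ one needs $s>0$ and the trace term $\norm{\trace\UU}_{L^2}^2$ must be brought in to close the estimate. I expect the paper to have either proved such a weighted Hardy/Poincaré inequality as a preliminary lemma or to cite \cite{AGG94}; I would invoke that here.

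For the truncated problem the coercivity is in a sense easier, since on $\R^d\times(0,\YY)$ the $y$-variable is bounded, so one expects a Poincaré inequality in the $y$-direction anchored at the trace $y=0$: something like $\norm{\VV}_{L^2(y^\alpha,\R^d\times(0,\YY))}^2 \lesssim \YY^{1+\alpha}\norm{\partial_y\VV}_{L^2(y^\alpha)}^2 + \YY^{\alpha}\cdot(\text{trace contribution})$, combined again with the weighted Hardy inequality in the $x$-variable (uniform in $\YY$ since $\rho \geq 1$). The $\YY$-dependence in the claimed bound $\norm{\UU^\YY}_{\HH^\YY} \leq C(1+\YY^{-1})\min(1,s^{-1})\norm{f}_{L^2(\Omega)}$ suggests that the relevant coercivity constant degrades like $\YY^{-1}$ for small $\YY$ (the Poincaré constant blows up as the cylinder becomes thin in a way that is compensated only for $\YY \gtrsim 1$), while for large $\YY$ the constant stabilizes. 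I would track constants carefully through the Poincaré step to extract exactly this factor; this bookkeeping, rather than any deep idea, is the main technical nuisance.

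Once coercivity (with constant $c>0$, respectively $c_\YY \gtrsim \min(1,\YY)$ up to the $s$-dependence) and continuity of both bilinear forms on $\HH$, $\HH^\YY$ are in hand, existence and uniqueness follow from Lax–Milgram. For the a priori bound I would test with $\VV=\UU$ (resp.\ $\VV^\YY=\UU^\YY$), use coercivity on the left, and on the right use that $\support f\subseteq\Omega$ together with a trace estimate $\norm{\trace\VV}_{L^2(\Omega)} \lesssim \norm{\VV}_{\HHwr}$ (this is Lemma~\ref{lem:trace}), or — to capture the $\min(1,s^{-1})$ factor — split the right-hand side as $d_\beta(f,\trace\VV)_{L^2} \leq d_\beta\norm{f}_{L^2(\Omega)}\norm{\trace\VV}_{L^2(\Omega)}$ and, when $s$ is large, absorb $\norm{\trace\VV}_{L^2}$ into the $s\norm{\trace\UU}_{L^2}^2$ term of $\HH$ via Young's inequality, gaining the $s^{-1}$. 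Dividing through yields the stated estimates. The statement that the bilinear forms are coercive is then simply a restatement of the key lemma used in the proof. The one point requiring genuine care is the $d=2$ case, where the ordinary weighted Hardy inequality fails and one must argue that $\norm{\nabla\UU}_{L^2(y^\alpha)}^2 + s\norm{\trace\UU}_{L^2(\R^2)}^2$ already dominates $\norm{\UU}_{\HH}^2$; I would handle this by a weighted Poincaré–Friedrichs inequality in the spirit of the two-dimensional exterior Laplace theory of \cite{AGG94}, using $s\geq\sigma_0>0$ crucially.
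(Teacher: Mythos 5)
Your overall strategy (Lax--Milgram, coercivity via the weighted Poincar\'e/Hardy inequality with the trace term needed for $d=2$, and the $\min(1,s^{-1})$ factor extracted by playing the data off against the $s\norm{\trace\UU}_{L^2(\R^d)}^2$ part of the norm) is the same as the paper's, and your treatment of the full-cylinder cases (including $s=0$, $d=3$, via the weighted trace estimate \eqref{eq:trace_l2} of Lemma~\ref{lem:trace} and boundedness of $\Omega$) is sound. The genuine gap is in the truncated problem with $s=0$, which is precisely where the factor $(1+\nicefrac{1}{\YY})$ lives. You attribute this factor to a degradation of the coercivity constant for small $\YY$, via a $y$-direction Poincar\'e inequality anchored at the trace $y=0$. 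This cannot work as described: when $s=0$ the trace term is not part of $\norm{\cdot}_{\HH^\YY}$ nor of $A^\YY(\UU,\UU)$, so there is nothing to anchor to; moreover the constant $\YY^{1+\alpha}$ you write down goes to zero, not infinity, as the cylinder becomes thin, so your own scaling contradicts the claimed blow-up. In the paper the coercivity constant is in fact \emph{uniform} in $\YY$, because Lemma~\ref{lemma:my_poincare} holds for all $\YY\in(0,\infty]$ with a constant independent of $\YY$ (for $s=0$ one has $d=3$ and only the $x$-direction Poincar\'e inequality is used).

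The $\YY$-dependence instead comes from bounding the right-hand side functional $d_\beta(f,\trace\VV)_{L^2(\R^d)}$ when $s=0$ and $\YY<\infty$: the trace estimate \eqref{eq:trace_l2} is proved on the half-infinite cylinder and is not directly available on $\R^d\times(0,\YY)$, and with $s=0$ you cannot buy $\norm{\trace\VV}_{L^2}$ from the norm either. The paper resolves this with a cutoff $\chi$ in $y$ with $\chi\equiv 1$ on $(0,\YY/2)$, $\support\chi\subset(0,\YY)$ and $\norm{\chi'}_{L^\infty}\lesssim \YY^{-1}$, applies the local trace estimate of \cite[Lem.~3.7]{KarMel19} to $\chi\UU^\YY$ on $\Omega\times(0,\YY)$, and the differentiation of $\chi$ is what produces the $\nicefrac{1}{\YY}$. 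Your proposal neither supplies this (or an equivalent) argument nor a correct alternative source for the factor, so as written the a priori bound for $\UU^\YY$ in the $s=0$ case is not established; the remaining parts of your argument are fine and essentially identical to the paper.
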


By the following proposition, we also obtain that solutions to the truncated problem converge to solutions to the non-truncated problem as the truncation parameter $\YY$ tends to infinity.

\begin{proposition}
  \label{lemma:truncation_convergence}
  Let $\UU$ solve~\eqref{eq:weakform} and, for $\YY >0$, let
  $\UU^\YY$ solve~\eqref{eq:truncated_BLF_eq}. For any fixed
  $0<\widetilde{\YY} < \YY$,
  it holds that  
  $\UU^\YY \to \UU$ in $H^1_{\rho}(y^\alpha, \R^d \times (0,\widetilde{\YY}))$
  as $\YY \to \infty$.
  If $s>0$, there additionally holds $\trace \UU^\YY \rightarrow \trace \UU$ in $L^2(\R^d)$ as $\YY \to \infty$.
\end{proposition}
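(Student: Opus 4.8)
I would prove the convergence by a soft compactness-and-energy argument that relies only on the a-priori bounds and coercivity of Proposition~\ref{prop:discrete_well_posedness} and on the variational characterisations of $\UU$ and $\UU^\YY$, so that one never has to deal explicitly with the $y$-profile of the solutions. \emph{Step 1 (uniform bounds, local weak limits).} For $\YY\ge 1$, Proposition~\ref{prop:discrete_well_posedness} gives $\norm{\UU^\YY}_{\HH^{\YY}}\lesssim\norm{f}_{L^2(\Omega)}$ uniformly in $\YY$; hence, for every fixed $M<\infty$ and $\YY>M$, the restrictions $\UU^\YY|_{\R^d\times(0,M)}$ are bounded in $H^1_\rho(y^\alpha,\R^d\times(0,M))$ and $\trace\UU^\YY$ is bounded in $L^2(\R^d)$ if $s>0$ (in general in the weighted trace space of Lemma~\ref{lem:trace}). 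By reflexivity and a diagonal extraction over $M\in\N$, along any sequence $\YY_k\to\infty$ some subsequence satisfies $\UU^{\YY_k}\rightharpoonup\UU^\ast$ weakly in $H^1_\rho(y^\alpha,\R^d\times(0,M))$ for every $M$; weak lower semicontinuity of the norms combined with monotone convergence in $M$ then gives $\UU^\ast\in\HHwr$.

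\emph{Step 2 (identification of the limit).} Any $\VV\in C_c^\infty(\R^d\times[0,\infty))$ (its support may meet $\{y=0\}$) with $\support_y\VV\subseteq[0,M]$ is an admissible test function for the truncated problem as soon as $\YY>M$, so $A^\YY(\UU^\YY,\VV)=d_\beta(f,\trace\VV)$; since $\VV$ has compact support, passing to the weak limit in the Dirichlet term (over the fixed slab $\R^d\times(0,M)$) and in the trace term yields $A(\UU^\ast,\VV)=d_\beta(f,\trace\VV)$. Density of $C_c^\infty(\R^d\times[0,\infty))$ in $\HHwr$ — which can be obtained by a cutoff in $\rho$, compatible with the weight $\rho^{-2}$ — together with coercivity of $A$ then forces $\UU^\ast=\UU$. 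As the limit is independent of the chosen subsequence, the whole family converges: $\UU^\YY\rightharpoonup\UU$ weakly in $H^1_\rho(y^\alpha,\R^d\times(0,M))$ for every $M$, and $\trace\UU^\YY\rightharpoonup\trace\UU$.

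\emph{Step 3 (from weak to strong).} Testing the truncated equation with $\VV^\YY=\UU^\YY$ and using the trace convergence gives the scalar identity $A^\YY(\UU^\YY,\UU^\YY)=d_\beta(f,\trace\UU^\YY)\to d_\beta(f,\trace\UU)=A(\UU,\UU)$. Writing $(0,\YY)=(0,M)\cup(M,\YY)$ and combining this with weak lower semicontinuity of the Dirichlet energy on the slabs $\R^d\times(M,M')$ (and letting $M'\to\infty$) and of $\norm{\trace\cdot}_{L^2(\R^d)}$, I would deduce that $\int_0^M y^\alpha\int_{\R^d}\mathfrak{A}_x\nabla\UU^\YY\cdot\nabla\UU^\YY\,dx\,dy$ and $s\norm{\trace\UU^\YY}_{L^2(\R^d)}^2$ converge to the corresponding quantities for $\UU$. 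Since weak convergence together with convergence of the norm yields strong convergence in a Hilbert space, $\nabla\UU^\YY\to\nabla\UU$ in $L^2(y^\alpha,\R^d\times(0,M))$ and, for $s>0$, $\trace\UU^\YY\to\trace\UU$ in $L^2(\R^d)$. Finally, coercivity of $A^{M}$ (a weighted Poincar\'e--Hardy inequality on the finite cylinder) applied to $e:=\UU^\YY-\UU$ bounds $\norm{e}_{H^1_\rho(y^\alpha,\R^d\times(0,M))}$ by $\norm{\nabla e}_{L^2(y^\alpha,\R^d\times(0,M))}+\sqrt{s}\,\norm{\trace e}_{L^2(\R^d)}\to 0$; taking $M=\widetilde\YY$ gives the assertion.

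\emph{Expected main obstacle.} The delicate feature is that $x$ ranges over all of $\R^d$ while only the decaying weight $\rho^{-2}$ appears in the zero-order part of the norm. The usual truncation shortcuts break down: extending a function from $\R^d\times(0,\YY)$ by a constant in $y$ leaves $H^1_\rho$ (the $y^\alpha$-weighted Dirichlet integral diverges), and a C\'ea-type estimate of $\UU^\YY-\UU$ via a cutoff at $y=\YY$ generates $L^2(y^\alpha)$-terms without the $\rho^{-2}$ factor that are not controlled by the $\HH$-norm. The scheme above avoids both issues by never leaving the finite cylinders $\R^d\times(0,M)$ in the decisive estimates and by extracting strong convergence from the single energy identity $A^\YY(\UU^\YY,\UU^\YY)\to A(\UU,\UU)$. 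A quantitative alternative — presumably the route that also yields the algebraic decay rate of the next proposition — would first establish decay of $\UU$ in a $y$-growing weighted space (such as $H^1_\mu(y^\alpha,\cdot)$) via an inf--sup argument and then truncate; this is technically heavier but produces a rate.
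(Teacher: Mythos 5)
Your proposal is correct in outline, but it takes a genuinely different route from the paper. The paper proves the convergence quantitatively: it combines the weighted decay estimate \eqref{eq:decay_all_directions} of Lemma~\ref{lemma:stronger_decay} (obtained by inf-sup theory in $\rho^\varepsilon$-weighted spaces) with an integration-by-parts identity over the expanding half-balls $B_\YY^+$ to show that the energy of $E=\UU-\UU^\YY$ on every bounded half-ball is $\lesssim (1+\YY^2)^{-\varepsilon/2}$, and then identifies the limit via the Cauchy-sequence property of Lemma~\ref{lemma:Cauchy}. You instead run a soft compactness/energy argument: uniform a-priori bounds and weak limits on slabs, identification of the limit by testing with compactly supported functions plus a density argument, the energy identity $A^\YY(\UU^\YY,\UU^\YY)=d_\beta(f,\trace\UU^\YY)$ together with weak lower semicontinuity on slabs to obtain convergence of the slab energies, the Radon--Riesz upgrade to strong convergence of the gradients (and of the traces if $s>0$), and finally the Poincar\'e inequality of Lemma~\ref{lemma:my_poincare} on the finite cylinder. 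What each buys: your route avoids Lemmas~\ref{lemma:stronger_decay}--\ref{lemma:Cauchy} entirely but is purely qualitative, whereas the paper's route reuses exactly the machinery needed for the rate in Proposition~\ref{lemma:truncation_error} and even yields an explicit algebraic bound on bounded sets. Two ingredients you invoke are not established in the paper and should be verified if you write this up: (i) density of $C_c^\infty(\R^d\times[0,\infty))$ in $\HHwr$ (your cutoff in $\rho$ works since $|\nabla \chi_R|\lesssim \rho^{-1}$ on the cut-off annulus, and mollification near $y=0$ can be handled by even reflection, the weight $|y|^\alpha$ being a Muckenhoupt $A_2$ weight); and (ii) for $s=0$, $d=3$, the passage $(f,\trace\UU^\YY)_{L^2(\R^d)}\to(f,\trace\UU)_{L^2(\R^d)}$: Lemma~\ref{lem:trace} is stated on the full half-space, so for the truncated solutions you should instead use the local trace estimate on $\Omega\times(0,\YY)$ (as in the proof of Proposition~\ref{prop:discrete_well_posedness}) together with weak continuity of that local trace map, which suffices because $\support f\subseteq\Omega$.
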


Finally, we also obtain algebraic rates of convergence as $\YY\rightarrow \infty$ for the difference of the truncated and the non-truncated full-space solutions.

\begin{proposition}
  \label{lemma:truncation_error}
  Fix $\YY >0$. Let $\UU$ solve~\eqref{eq:weakform} and $\UU^\YY$ solve~\eqref{eq:truncated_BLF_eq}.   
   Let $\mu$ be given by
  \begin{align}
    \label{eq:def_of_decay_mu}
    \mu:=
    \begin{cases}
      1+\abs{\alpha} & s>0 \\
       1+\alpha & s = 0
    \end{cases}.
  \end{align}
  Then, there exists a constant $C>0$ depending only on $\alpha$ and $d$ such that 
  \begin{align*}
    \|\UU^{\YY} - \UU\|^2_{\HHwry}
    +s\|\trace(\UU^{\YY} - \UU)\|^2_{L^2(\R^d)}
    \leq C \YY^{-\mu}   
    \norm{f}^2_{L^2(\Omega)}.
  \end{align*}
\end{proposition}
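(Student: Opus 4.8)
The plan is to combine coercivity of the truncated bilinear form $A^\YY$ with the decay of the full-space solution $\UU$ provided by Lemma~\ref{lemma:stronger_decay}, the link between the two being a suitable extension operator from the truncated cylinder back to the half-space. Set $e:=\UU^\YY-\UU\in\HHwry$ and note that the left-hand side of the asserted estimate equals $\|e\|_{\HH^\YY}^2$. Coercivity of $A^\YY$ on $\HHwry$ (Proposition~\ref{prop:discrete_well_posedness}) gives $\|e\|_{\HH^\YY}^2\lesssim A^\YY(e,e)$. Let $\extension\colon\HHwry\to\HHwr$ be an extension operator with $(\extension v)|_{\R^d\times(0,\YY)}=v$, so in particular $\trace\extension v=\trace v$. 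Testing \eqref{eq:weakform} with $\extension e$ and \eqref{eq:truncated_BLF_eq} with $e$ produces identical right-hand sides; since $A^\YY$ is the restriction of $A$ to $\R^d\times(0,\YY)$, equating and rearranging the left-hand sides cancels all contributions over $\R^d\times(0,\YY)$ together with the trace terms and leaves
\begin{align*}
  \|e\|_{\HH^\YY}^2 \ \lesssim\ A^\YY(e,e)\ =\ \int_{\YY}^{\infty} y^\alpha\int_{\R^d}\mathfrak{A}_x\nabla\UU\cdot\nabla(\extension e)\,dx\,dy .
\end{align*}
By boundedness of $\mathfrak{A}$ and the Cauchy--Schwarz inequality,
\begin{align*}
  \|e\|_{\HH^\YY}^2\ \lesssim\ \|\nabla\UU\|_{L^2(y^\alpha,\,\R^d\times(\YY,\infty))}\;\|\nabla(\extension e)\|_{L^2(y^\alpha,\,\R^d\times(\YY,\infty))} .
\end{align*}

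It remains to choose $\extension$ so that $\|\nabla(\extension e)\|_{L^2(y^\alpha,\,\R^d\times(\YY,\infty))}\lesssim\|e\|_{\HH^\YY}$: a convenient choice (at least for $s>0$) is the reflection of $e$ across $\{y=\YY\}$ multiplied by a cut-off supported in $(\YY,2\YY)$, for which this bound follows from a weighted Hardy/Poincar\'e estimate in the $y$-variable on the slab $\R^d\times(\YY/2,\YY)$, using $\alpha\in(-1,1)$ and, when $s>0$, that $\|\trace e\|_{L^2(\R^d)}$ is part of $\|e\|_{\HH^\YY}$. Together with Lemma~\ref{lemma:stronger_decay}, which furnishes the tail bound $\|\nabla\UU\|_{L^2(y^\alpha,\,\R^d\times(\YY,\infty))}^2\lesssim\YY^{-\mu}\|f\|_{L^2(\Omega)}^2$ (equivalently, $\UU\in\HHwm$ with $\|\UU\|_{\HHwm}\lesssim\|f\|_{L^2(\Omega)}$), this yields $\|e\|_{\HH^\YY}\lesssim\YY^{-\mu/2}\|f\|_{L^2(\Omega)}$, which is the claim. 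The different value of $\mu$ in \eqref{eq:def_of_decay_mu} for $s=0$ reflects that $\|\trace e\|_{L^2(\R^d)}$ is then unavailable in the energy norm, so a slightly modified extension and the correspondingly weaker form of Lemma~\ref{lemma:stronger_decay} must be used there.

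The conceptual core — and hence the main obstacle — is the decay statement of Lemma~\ref{lemma:stronger_decay} itself, which has to be established by inf-sup arguments in the $\rho$-weighted spaces, since no explicit $y$-dependence is available; this is precisely where the full-space setting genuinely differs from the bounded-domain case treated in \cite{tensor_fem}. Within the reduction above, the delicate points are the construction of an extension operator that is bounded uniformly in $\YY$ and compatible with the $\rho$-weighted norms, the careful bookkeeping of the powers of $\YY$ so that only the constant $C$ and not the rate $\YY^{-\mu}$ is affected, and treating the cases $s=0$ and $s>0$ consistently with the definition of $\mu$.
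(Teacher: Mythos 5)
Your reduction of the error to a tail term, $\|\UU^\YY-\UU\|_{\HH^\YY}^2\lesssim\int_\YY^\infty y^\alpha\int_{\R^d}\mathfrak{A}_x\nabla\UU\cdot\nabla(\extension(\UU^\YY-\UU))\,dx\,dy$, is sound in principle, but the key input you invoke is not available at this point: Lemma~\ref{lemma:stronger_decay} establishes the weighted decay \eqref{eq:decay_in_y} only for the \emph{truncated} solutions $\UU^\YY$ with $\YY<\infty$ (its inf-sup proof genuinely uses $\YY<\infty$, e.g.\ through the test functions involving $\int_y^\YY\tau^\alpha\UU\,d\tau$ and the non-degeneracy argument), whereas your tail bound $\|\nabla\UU\|^2_{L^2(y^\alpha,\R^d\times(\YY,\infty))}\lesssim\YY^{-\mu}\|f\|^2_{L^2(\Omega)}$, i.e.\ $\UU\in\HHwm$, concerns the \emph{full-space} solution. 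In the paper this statement is exactly Corollary~\ref{cor:decayfull}, and it is \emph{deduced from} Proposition~\ref{lemma:truncation_error}; so, as written, your argument is circular unless you first prove the $\YY=\infty$ decay independently, which is precisely the part the paper avoids. The paper's route sidesteps this by never comparing $\UU^\YY$ with $\UU$ directly: Lemma~\ref{lemma:Cauchy} compares two finite truncations $\UU^\YY$ and $\UU^{3/2\YY}$ (so Lemma~\ref{lemma:stronger_decay} applies to $\UU^{3/2\YY}$), and the proposition follows by a telescoping sum over $\UU^{(3/2)^n\YY}$ together with the convergence $\UU^\YY\to\UU$ from Proposition~\ref{lemma:truncation_convergence} and a geometric series.

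A second, related gap is the extension operator. Reflecting across $y=\YY$ onto the full interval $(\YY,2\YY)$ maps values of $e$ near $y=0$ to points near $y=2\YY$, so the weights $y^\alpha$ and $(2\YY-y)^\alpha$ are not comparable there and the gradient bound is not uniform in $\YY$ for $\alpha>0$; one must restrict the reflection to a band such as $(\YY,\tfrac32\YY)$, as in Lemma~\ref{lemma:extension_operator}. More seriously, once you multiply by a cutoff you pick up the term $\YY^{-1}\|e\|_{L^2(y^\alpha,\R^d\times(\YY/2,\YY))}$ with \emph{no} $\rho^{-1}$ weight in $x$; for $s>0$ this can be absorbed via $\trace e$ as you indicate, but for $s=0$ the energy norm only controls $\rho^{-1}e$, and $\rho^{-1}\ll\YY^{-1}$ for $|x|\gg\YY$, so the bound $\|\nabla(\extension e)\|_{L^2(y^\alpha,\R^d\times(\YY,\infty))}\lesssim\|e\|_{\HH^\YY}$ is not established; your remark that a ``slightly modified extension'' handles $s=0$ leaves exactly this open. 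The paper avoids the cutoff altogether because the extended difference is tested against the problem truncated at $\tfrac32\YY$ (with its Neumann condition there), not against the full-space formulation, so membership in $\HHwr$ is never needed.
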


\subsubsection{Regularity}

For solutions to the extension problem as well as the truncated extension problem there hold analytic type weighted estimates for the extended variable.  Estimates of that type allow to employ $hp$-finite elements in the extended variable, which will be considered in \cite{FR23}.

\begin{proposition}[Regularity in $y$]
  \label{prop:regularity}
  Fix $\YY \in (0,\infty]$ and let
  $\ell \in \N$. Let $\UU$ solve \eqref{eq:truncated_BLF_eq}. Then,  
  there exists constants $C,K>0$
  and $\varepsilon \in (0,1)$ such that the following estimate holds:
  \begin{align*}
    \big\|{y^{\ell-\varepsilon}\nabla\partial^{\ell}_y \UU}\big\|_{L^2(y^\alpha,\R^d\times(0,\YY))}
    \leq CK^{\ell} \ell! \norm{f}_{L^2(\Omega)}.
  \end{align*}
  All constants are independent of $\ell, \YY,$ and $\UU$.
\end{proposition}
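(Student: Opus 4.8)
The plan is to prove the analytic-type estimate by an induction on $\ell$, following the bootstrapping strategy outlined in the introduction. The base cases $\ell = 0, 1$ are the initial regularity shift: one shows $\|y^{-\varepsilon} \nabla \UU\|_{L^2(y^\alpha,\R^d\times(0,\YY))} \lesssim \|f\|_{L^2(\Omega)}$ together with an $L^2$-type decay estimate for $\partial_y \UU$ weighted by $y^{1-\varepsilon}$; these are precisely Lemma~\ref{lemma:initial_shift} and Lemma~\ref{lemma:apriori_decay_l2} referenced in the layout, so I would invoke them directly. The small parameter $\varepsilon \in (0,1)$ is forced by the fact that the weight $y^\alpha$ with $\alpha \in (-1,1)$ only allows a fractional-order gain near $y=0$, and it should be chosen once and for all at the start, depending only on $\alpha$ (e.g. $\varepsilon$ slightly less than $\min(1, 1-\alpha)$ or similar).

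For the induction step, suppose the estimate holds up to order $\ell$. Differentiating the interior equation $-\fdiv(y^\alpha \mathfrak{A}_x \nabla \UU) = 0$ formally $\ell+1$ times in $y$ produces an equation of the form $-\fdiv_x(y^\alpha \mathfrak{A}\nabla_x \partial_y^{\ell+1}\UU) - \partial_y(y^\alpha \partial_y^{\ell+2}\UU) = \text{(lower order terms in $\partial_y$)}$, where the right-hand side collects commutator terms coming from the $y$-dependence of the weight $y^\alpha$ (note $\mathfrak{A}$ does not depend on $y$, which simplifies the $x$-part). Testing this differentiated equation with a suitable weighted test function — essentially $y^{2(\ell+1)-2\varepsilon}\partial_y^{\ell+1}\UU$, possibly with a cutoff to justify the integration by parts near $y=0$ and $y=\YY$ — and carefully tracking the weight powers, one obtains a Caccioppoli-type inequality bounding $\|y^{\ell+1-\varepsilon}\nabla \partial_y^{\ell+1}\UU\|$ in terms of lower-order quantities $\|y^{\ell-\varepsilon}\nabla\partial_y^{\ell}\UU\|$ and $\|y^{\ell-1-\varepsilon}\nabla\partial_y^{\ell-1}\UU\|$ with explicitly computable constants. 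Combining with the induction hypothesis and absorbing the combinatorial factors yields the bound $CK^{\ell+1}(\ell+1)!\|f\|_{L^2(\Omega)}$, provided $K$ is chosen large enough relative to the constants appearing in the Caccioppoli estimate. The homogeneous Neumann condition at $y=\YY$ (when $\YY<\infty$) ensures no boundary contributions at the artificial boundary, and for $\YY=\infty$ the decay results of Section~3 guarantee the relevant boundary terms at infinity vanish.

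The main obstacle I expect is making the weighted integration by parts rigorous and getting the constants in the Caccioppoli step to have the right dependence on $\ell$. Specifically, when one integrates by parts in $y$ against the weight $y^{2(\ell+1)-2\varepsilon}$, the derivative hitting this weight produces a factor proportional to $\ell$, and one must verify that the "bad" terms of this type can be absorbed (for instance by Young's inequality, or because they come with a favorable sign, or by a Hardy-type inequality in the weighted space) without spoiling the factorial growth — a linear-in-$\ell$ loss at each step is exactly what a factorial can absorb, but a quadratic loss would not be acceptable, so the bookkeeping has to be tight. A secondary technical point is the justification of differentiating the equation $\ell+1$ times in $y$ in the first place: this requires an a priori interior regularity statement for $\UU$ away from $y=0$ (the equation is uniformly elliptic for $y$ bounded away from $0$ and $\infty$), which I would establish by standard elliptic regularity and a covering/scaling argument, and only then run the weighted energy estimates globally. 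The $x$-regularity and countably-normed-space consequences mentioned at the end of the layout would then follow as corollaries, but are not needed for this proposition itself.
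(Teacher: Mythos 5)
Your proposal follows essentially the same route as the paper: the base case is the initial weighted shift of Lemma~\ref{lemma:initial_shift}, and the induction step is exactly the weighted Caccioppoli (``shift-by-one'') estimate of Lemma~\ref{lemma:apriori_decay_l2}, proved by testing the $y$-differentiated equation with $y^{2(\ell-\varepsilon)}\partial_y^{\ell}\UU$ times a cutoff, with $\varepsilon$ fixed once depending on $\alpha$. The only imprecision is that differentiating the equation produces commutator terms involving \emph{all} lower-order $y$-derivatives $\partial_y^{k+1}\UU$, $k=0,\dots,\ell-1$, with factors $\ell!/k!$ (not just the two preceding orders), but your strong induction hypothesis covers them and the resulting sum $\ell!\sum_k K^k$ still yields the $CK^\ell\,\ell!$ bound, just as in the paper.
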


In fact, the regularity results imply that solutions to our model problem are in certain countably normed spaces.
Following \cite[Sec.~5.5.1]{tensor_fem}, we introduce the Bochner spaces $L^2_\alpha((0,\infty);X)$ of square integrable functions (with respect to the weight $y^\alpha$) and values in the Banach space $X$ as well as for constants $C,K>0$, the countably normed spaces 
\begin{align*}
  \mathcal{B}^1_{\varepsilon,0}(C,K;X) := \Big\{ \VV \in C^\infty((0,\infty);X) : & 
\norm{\VV}_{L^2(y^\alpha,(0,\infty);X)} < C, \\
&\norm{y^{\ell+1-\varepsilon}\VV^{(\ell+1)}}_{L^2(y^\alpha,(0,\infty);X)} < C K^{\ell+1} (\ell+1)! \; \forall \ell \in \N_0
\Big\}.
\end{align*}

Proposition~\ref{prop:regularity} provides control of $y^{\ell-\varepsilon} \partial_y^{\ell+1} \UU$, which directly gives the following Corollary.

\begin{corollary}\label{cor:regularity_y}
Fix $\YY \in (0,\infty]$ and let $\UU$ solve \eqref{eq:truncated_BLF_eq}.
  Then, there are constants $C,K>0$ such that there holds 
  \begin{align}
    \partial_{y}\UU \in \mathcal{B}^1_{\varepsilon,0}(C,K;L^2(\R^d)).
  \end{align}
\end{corollary}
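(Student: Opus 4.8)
The plan is to derive Corollary~\ref{cor:regularity_y} as an essentially immediate consequence of Proposition~\ref{prop:regularity} by identifying $\UU(\cdot,y)$ with a Bochner function $y \mapsto \UU(\cdot,y)$ taking values in $X = L^2(\R^d)$, noting that the mixed $L^2(y^\alpha,\R^d\times(0,\YY))$ norm factors through the Bochner norm via Fubini. First I would record the basic identification: for $\VV := \partial_y \UU$, the weighted space $L^2(y^\alpha,\R^d\times(0,\YY))$ coincides isometrically with the Bochner space $L^2_\alpha((0,\YY);L^2(\R^d))$, so that a bound on $\norm{y^{\ell-\varepsilon}\nabla\partial_y^\ell \UU}_{L^2(y^\alpha,\R^d\times(0,\YY))}$ in particular controls $\norm{y^{\ell-\varepsilon}\partial_y^{\ell+1}\UU}_{L^2(y^\alpha,\R^d\times(0,\YY))} = \norm{y^{(\ell+1)-\varepsilon}\,(\partial_y\UU)^{(\ell)}\,y^{-1}}$; more cleanly, apply Proposition~\ref{prop:regularity} with $\ell$ replaced by $\ell+1$ to the $x$-gradient, which dominates the pure $y$-derivative $\partial_y^{\ell+1}\UU$ (since $\nabla$ includes $\partial_y$), yielding
\begin{align*}
  \norm{y^{(\ell+1)-\varepsilon}\,(\partial_y\UU)^{(\ell)}}_{L^2(y^\alpha,(0,\YY);L^2(\R^d))}
  = \norm{y^{(\ell+1)-\varepsilon}\partial_y^{\ell+1}\UU}_{L^2(y^\alpha,\R^d\times(0,\YY))}
  \leq C K^{\ell+1}(\ell+1)!\,\norm{f}_{L^2(\Omega)}.
\end{align*}

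Second, I would treat the zeroth-order bound $\norm{\partial_y\UU}_{L^2(y^\alpha,(0,\YY);L^2(\R^d))} < C$ separately: this is not the $\ell=0$ instance of the derivative estimate (which carries the weight $y^{1-\varepsilon}$), so I would instead invoke Proposition~\ref{prop:discrete_well_posedness}, which gives $\norm{\UU}_{\HH} \lesssim \min(1,s^{-1})\norm{f}_{L^2(\Omega)}$ (respectively the $(1+1/\YY)$-weighted bound in the truncated case), and in particular $\norm{\nabla\UU}_{L^2(y^\alpha,\R^d\times(0,\YY))} \lesssim \norm{f}_{L^2(\Omega)}$, hence $\norm{\partial_y\UU}_{L^2(y^\alpha,(0,\YY);L^2(\R^d))} \le \norm{\nabla\UU}_{L^2(y^\alpha,\R^d\times(0,\YY))} < \infty$, so the first defining inequality of $\mathcal{B}^1_{\varepsilon,0}$ holds after possibly enlarging $C$. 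Smoothness $\partial_y\UU \in C^\infty((0,\YY);L^2(\R^d))$ follows from interior regularity for the degenerate elliptic equation \eqref{eq:extension} away from $y=0$ (the weight $y^\alpha$ is smooth and non-degenerate on any interval $[\delta,\YY]$, so elliptic regularity applies), which I expect is already available from the analysis underlying Proposition~\ref{prop:regularity} or can be cited; in the truncated case $\YY<\infty$ one simply notes that the estimates are uniform and restrict harmlessly.

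Third, I would assemble: choosing the constants $C,K$ from Proposition~\ref{prop:regularity} (adjusting $C$ to absorb the $\ell=0$/zeroth-order term and any fixed multiplicative factors independent of $\ell,\YY$), all the defining inequalities of $\mathcal{B}^1_{\varepsilon,0}(C',K;L^2(\R^d))$ with $\VV=\partial_y\UU$ are verified for every $\ell\in\N_0$, with constants independent of $\YY$ as asserted. The only genuine subtlety — and the step I would flag as requiring a sentence of care rather than a one-liner — is matching the index conventions: Proposition~\ref{prop:regularity} is stated with weight exponent $\ell-\varepsilon$ on $\nabla\partial_y^\ell\UU$, whereas the space $\mathcal{B}^1_{\varepsilon,0}$ demands weight exponent $(\ell+1)-\varepsilon$ on $\VV^{(\ell+1)}=\partial_y^{\ell+1}\UU$; these align precisely upon substituting $\ell \rightsquigarrow \ell+1$ in the Proposition and discarding the $x$-derivative contributions from $\nabla$, so no loss of $\varepsilon$ or factorial growth occurs. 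Everything else is bookkeeping, and no new analytic input beyond Propositions~\ref{prop:discrete_well_posedness} and \ref{prop:regularity} is needed.
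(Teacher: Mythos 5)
Your proposal is correct and takes essentially the paper's route: the paper's entire proof is the one-line observation that Proposition~\ref{prop:regularity}, applied with the index shifted by one and keeping only the $y$-component of $\nabla$, controls $y^{\ell+1-\varepsilon}\partial_y^{\ell+2}\UU$, while the lowest-order Bochner norm of $\partial_y\UU$ and the $C^\infty$ smoothness away from $y=0$ are handled exactly as you do (via the energy bound $\UU\in\HHwr$ and interior elliptic regularity). Only note the off-by-one slip in your displayed estimate, where $(\partial_y\UU)^{(\ell)}=\partial_y^{\ell+1}\UU$ should read $(\partial_y\UU)^{(\ell+1)}=\partial_y^{\ell+2}\UU$; your final paragraph states the index matching correctly, so this is a typo rather than a gap.
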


We note that we formulated the previous corollary in terms of $\partial_y \UU$, whereas the regularity results in \cite[eqn.~(6.10)]{tensor_fem} are formulated for solutions $\UU$ to the extension problem on bounded domains. This is due to the fact that in the case of the full space problem the estimates do not hold for the lowest order term as $\UU \notin L^2(\R^{d}\times \R^+)$. Nonetheless, the regularity result of Corollary~\ref{cor:regularity_y} (together with $\UU \in H^1_\rho(\R^{d}\times \R^+)$) allows to construct interpolation operators in a similar way as in \cite[Lem.~11]{tensor_fem}.
\bigskip

Finally, we investigate the regularity in $x$. Since this will depend
on the regularity  of the data $\mathfrak{A}$ and $f$, we only
consider the case of finite regularity.
\begin{proposition}[Regularity in $x$]
\label{prop:regularity_x}
Assume that $\mathfrak{A} \in C^{m}(\R^d;\R^{d\times d})$ and
$f \in H^{m}(\Omega)$. Then, for every multiindex $\zeta \in \N_0^d$ with $\abs{\zeta} = m$ there holds
\begin{align*}
  \|\nabla\partial_x^\zeta\UU\|_{L^2(y^\alpha,\R^d\times \R^+)} &\leq C \|f\|_{H^{m}(\Omega)}.
\end{align*}
The constant $C$ depends on $\Omega, \mathfrak{A}$, $m$ and $d$, but is independent of $f$ and $\UU$.
\end{proposition}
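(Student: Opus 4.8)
\textbf{Proof plan for Proposition~\ref{prop:regularity_x}.}

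The plan is to differentiate the extension equation \eqref{eq:extension} (or its truncated counterpart) with respect to the tangential variables and to set up an induction on $m$, exploiting the fact that away from $\Omega$ the coefficient $\mathfrak A$ is constant so no boundary terms in the extended direction appear. First I would observe that for $\abs{\zeta}=0$ the claim is just the energy estimate from Proposition~\ref{prop:discrete_well_posedness}, with $\norm{f}_{L^2(\Omega)} \le \norm{f}_{H^m(\Omega)}$. For the induction step, suppose the estimate is known for all multiindices of order at most $m-1$. Since $\partial_x$-derivatives act only in the $x$-variables and commute with $\partial_y$ and with multiplication by $y^\alpha$, formally applying $\partial_x^\zeta$ to $-\fdiv(y^\alpha \mathfrak A_x \nabla \UU) = 0$ and using the Leibniz rule gives
\begin{align*}
  -\fdiv\big(y^\alpha \mathfrak A_x \nabla (\partial_x^\zeta \UU)\big)
  = \fdiv\big(y^\alpha \textstyle\sum_{0 \ne \eta \le \zeta} \binom{\zeta}{\eta} (\partial_x^\eta \mathfrak A_x)\nabla(\partial_x^{\zeta-\eta}\UU)\big)
  =: \fdiv\big(y^\alpha F_\zeta\big),
\end{align*}
and similarly $d_\beta^{-1}\partial_{\nu^\alpha}(\partial_x^\zeta \UU) + s\,\trace(\partial_x^\zeta \UU) = \partial_x^\zeta f$ on $\R^d\times\{0\}$ (here the crucial point is that $\partial_x^\eta \mathfrak A_x \equiv 0$ for $\eta \ne 0$ outside $\overline\Omega$, so $F_\zeta$ is supported in $\overline\Omega \times \R^+$). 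Thus $\partial_x^\zeta \UU$ solves a variational problem of the same structure as \eqref{eq:weakform}, but with an inhomogeneous right-hand side consisting of the volume term $\int y^\alpha F_\zeta \cdot \nabla \VV$ and the data term $d_\beta(\partial_x^\zeta f, \trace\VV)_{L^2(\R^d)}$.

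The key step is then to bound these right-hand sides by the coercivity/continuity machinery already established. For the data term one uses $\norm{\partial_x^\zeta f}_{L^2(\Omega)} \le \norm{f}_{H^m(\Omega)}$ together with the trace estimate (Lemma~\ref{lem:trace}) to dominate $\trace\VV$ by $\norm{\VV}_{\HH}$. For the volume term, since $\mathfrak A$ is smooth on the bounded set $\overline\Omega$, each $\partial_x^\eta \mathfrak A_x$ is uniformly bounded there, so $\norm{F_\zeta}_{L^2(y^\alpha,\Omega\times\R^+)} \lesssim_{\mathfrak A,m} \sum_{\abs{\xi}\le m-1}\norm{\nabla \partial_x^\xi \UU}_{L^2(y^\alpha,\R^d\times\R^+)}$, which by the inductive hypothesis is $\lesssim \norm{f}_{H^{m-1}(\Omega)} \le \norm{f}_{H^m(\Omega)}$. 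Testing the variational equation for $\partial_x^\zeta \UU$ with $\VV = \partial_x^\zeta \UU$ and invoking coercivity of $A$ then yields
\begin{align*}
  \norm{\partial_x^\zeta \UU}_{\HH}^2 \lesssim \big|A(\partial_x^\zeta\UU,\partial_x^\zeta\UU)\big|
  \lesssim \big(\norm{F_\zeta}_{L^2(y^\alpha)} + \norm{\partial_x^\zeta f}_{L^2(\Omega)}\big)\,\norm{\partial_x^\zeta\UU}_{\HH},
\end{align*}
and dividing through gives the desired bound on $\norm{\nabla \partial_x^\zeta \UU}_{L^2(y^\alpha,\R^d\times\R^+)}$.

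The main obstacle is making the formal differentiation rigorous: one must justify that $\partial_x^\zeta \UU$ actually lies in $\HHwr$ (so that it is an admissible test function and solves the differentiated problem in the weak sense), rather than merely being a distribution. The clean way to handle this is via \emph{difference quotients} in the tangential directions: one shows that the difference quotients $D_h^\zeta \UU$ satisfy the perturbed variational problem with uniformly (in $h$) bounded data — using that the coefficients' difference quotients converge boundedly because $\mathfrak A \in C^m$ — derives a uniform $\HHwr$-bound by the argument above, and then passes to the limit $h\to 0$, identifying the weak limit with $\partial_x^\zeta \UU$ and upgrading weak convergence to the stated norm bound by lower semicontinuity. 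One must also take minor care that the weight $\rho^{-2}$ in the $\HHwr$-norm is $x$-independent enough to not interfere with tangential difference quotients — but since only the gradient term (without the $\rho$-weight, or with a benign one) enters the final estimate, and the full-space coercivity already controls the $\rho^{-2}$-weighted $L^2$ part, this is routine. The truncated case $\YY < \infty$ is identical, using coercivity of $A^\YY$ and the Neumann condition at $y=\YY$, which is preserved under $\partial_x^\zeta$.
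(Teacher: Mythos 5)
Your proposal is correct and takes essentially the same route as the paper: differentiate the extension problem in $x$, use that $\partial_x^\eta \mathfrak{A}$ vanishes outside $\overline{\Omega}$ so the Leibniz commutator acts as a compactly supported source, bound it by adapting the well-posedness/energy argument of Proposition~\ref{prop:discrete_well_posedness}, and induct on $m$. Your choice to keep the commutator in divergence form and pair it with $\nabla \VV$ (instead of the paper's estimate of $(F_\zeta,\WW)_{L^2}$ with $F_\zeta \in L^2(y^{-\alpha},\R^d\times\R^+)$), together with the difference-quotient justification, merely streamlines the bookkeeping and does not change the substance.
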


\section{Well-posedness and decay}
\label{sec:well-posedness}
In this section, we provide the proofs of Proposition~\ref{prop:discrete_well_posedness} (well-posedness), Proposition~\ref{lemma:truncation_convergence} (convergence) and Proposition~\ref{lemma:truncation_error} (algebraic rate of decay). 

\subsection{Trace estimate}
We start with a trace estimate in a certain weighted Sobolev space.

  \begin{lemma}\label{lem:trace}
    For all $\UU \in \HHwr$, there holds 
    \begin{subequations}
      \label{eq:trace}
      \begin{align}
        \label{eq:trace_semi}
      \abs{\trace \UU}_{H^\beta(\R^d)}
      \leq C \norm{\nabla \UU}_{L^2(y^\alpha,\R^d\times\R^+)}.
    \end{align}
    For $d=3$, we additionally have
    \begin{align}
      \label{eq:trace_l2}
      \|(1+|x|^2)^{-\beta/2} \trace\UU\|_{L^2(\R^d)}
      \leq C \norm{\nabla \UU}_{L^2(y^\alpha,\R^d\times\R^+)}.
    \end{align}
   In both cases the constant $C>0$ does only depend on $d$ and $\alpha$.
  \end{subequations}
\end{lemma}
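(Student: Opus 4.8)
The plan is to establish the trace estimate \eqref{eq:trace_semi} by reducing to the one-dimensional situation fibre-by-fibre in Fourier space, and then handle the $L^2$-type estimate \eqref{eq:trace_l2} by a Hardy-type inequality exploiting the decay of the Fourier weight for $d=3$. First I would take the partial Fourier transform $\widehat{\UU}(\xi,y)$ of $\UU$ in the $x$-variable. By Plancherel, the right-hand side becomes $\int_{\R^d}\int_0^\infty y^\alpha\big(|\xi|^2|\widehat{\UU}(\xi,y)|^2 + |\partial_y\widehat{\UU}(\xi,y)|^2\big)\,dy\,d\xi$, while $|\trace\UU|_{H^\beta(\R^d)}^2 = \int_{\R^d}|\xi|^{2\beta}|\widehat{\UU}(\xi,0)|^2\,d\xi$. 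So it suffices to prove, for each fixed $\xi$, the pointwise-in-$\xi$ bound
\begin{align*}
  |\xi|^{2\beta}|\widehat{\UU}(\xi,0)|^2 \leq C \int_0^\infty y^\alpha\big(|\xi|^2|\widehat{\UU}(\xi,y)|^2 + |\partial_y\widehat{\UU}(\xi,y)|^2\big)\,dy,
\end{align*}
which is a scaling-invariant one-dimensional trace inequality. The standard way to see this is: write $|\widehat{\UU}(\xi,0)|^2 = -\int_0^\infty \partial_y\big(\phi(y)|\widehat{\UU}(\xi,y)|^2\big)\,dy$ for a suitable cutoff $\phi$ with $\phi(0)=1$, expand the derivative, and apply the weighted Cauchy–Schwarz / Young inequality $2ab \le \varepsilon a^2 + \varepsilon^{-1}b^2$ to the cross term $\phi'(y)\widehat{\UU}\,\overline{\partial_y\widehat{\UU}}$, choosing $\phi(y)=e^{-|\xi| y}$ (or a rescaled fixed profile $\psi(|\xi|y)$) so that the terms $\int y^\alpha|\xi|^{?}\cdots$ balance with the correct power of $|\xi|$; the bookkeeping forces the exponent $2\beta = 1-\alpha$, which is exactly the relation $\alpha = 1-2\beta$. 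One must also justify that $y^{\alpha}|\widehat{\UU}(\xi,y)|^2\to 0$ along a subsequence $y\to\infty$ and that the trace at $y=0$ makes sense, which follows from $\UU\in H^1(y^\alpha,\cdot)$ and $\alpha\in(-1,1)$ (absolute continuity of $y\mapsto\widehat\UU(\xi,y)$ on compacts).

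For \eqref{eq:trace_l2} in the case $d=3$, the point is that one cannot absorb the low-frequency part: near $\xi = 0$ the weight $|\xi|^{2\beta}$ degenerates and $\UU$ need not be in $L^2(\R^d)$ on its trace. Instead I would split $\widehat{\trace\UU}$ into $|\xi|\le 1$ and $|\xi|>1$. On $|\xi|>1$ one has $1 \le |\xi|^{2\beta}$, so the high-frequency contribution is controlled by \eqref{eq:trace_semi}. For the low-frequency part, the weight $(1+|x|^2)^{-\beta/2}$ in physical space corresponds (via the mapping properties of the Riesz/Bessel potential) to a gain: one wants $\|(1+|x|^2)^{-\beta/2}v\|_{L^2(\R^3)}\lesssim \||\xi|^{-\beta}\widehat v\|_{L^2}$ for $v$ with $\widehat v$ supported in $|\xi|\le 1$, i.e. a (fractional) Hardy inequality $\||x|^{-\beta}\cdot\|_{L^2}\lesssim \||\xi|^{-\beta}\widehat{\cdot}\|_{L^2}$, valid precisely because $\beta<1<3/2=d/2$ so the homogeneous Sobolev space $\dot H^{-\beta}(\R^3)$ embeds appropriately and $|x|^{-\beta}$ is the associated weight (Hardy–Rellich / Pitt's inequality). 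Combining: $\|(1+|x|^2)^{-\beta/2}\trace\UU\|_{L^2} \lesssim \||\xi|^{-\beta}\widehat{\UU}(\xi,0)\|_{L^2(|\xi|\le1)} + \|\trace\UU\|_{L^2(|\xi|>1)} \lesssim$ the fibrewise estimate applied with weight $|\xi|^{-2\beta}$ instead of $|\xi|^{2\beta}$. That fibrewise estimate $|\xi|^{-2\beta}|\widehat\UU(\xi,0)|^2 \lesssim |\xi|^{-2}\int_0^\infty y^\alpha(|\xi|^2|\widehat\UU|^2+|\partial_y\widehat\UU|^2)\,dy$ is again proved by the same cutoff-and-Young argument, and the extra factor $|\xi|^{-2}$ on the right is harmless after integrating over $|\xi|\le 1$ (it even helps), so one ends up bounding everything by $\|\nabla\UU\|_{L^2(y^\alpha,\R^3\times\R^+)}$.

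The main obstacle I anticipate is getting the one-dimensional weighted trace inequality uniform in $\xi$ with the correct power $|\xi|^{2\beta}$ — i.e. tracking the scaling carefully through the substitution $y\mapsto y/|\xi|$ and confirming that the constant from the Young/Hardy step is finite exactly for $\alpha\in(-1,1)$ (the endpoint $\int_0^1 y^{\alpha}\,dy$ and $\int_0^\infty y^\alpha\psi(y)^2\,dy$ type integrals converge iff $\alpha>-1$). The $d=2$ exclusion of \eqref{eq:trace_l2} is explained by $d/2 = 1 \le \beta$ failing to give room (one would need $\beta < d/2 = 1$, which holds, but the $L^2$ Hardy weight $|x|^{-\beta}$ in $\R^2$ with $2\beta < 2$ is borderline and the unweighted $\dot H^{-\beta}(\R^2)\hookrightarrow$ local-$L^2$ fails at low frequency for the relevant range, consistent with the paper's standing assumption $s>0$ when $d=2$), so I would simply restrict to $d=3$ as stated and not pursue $d=2$. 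A secondary technical point is the density/approximation argument needed to reduce from general $\UU\in H^1_\rho(y^\alpha,\cdot)$ to smooth compactly supported functions for which the Fourier computations and integration by parts are rigorous; this is routine given the definition of the space but should be mentioned.
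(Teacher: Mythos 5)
Your handling of \eqref{eq:trace_semi} is sound and is essentially the standard argument (the paper does not reprove it; it cites \cite[Lem.~3.8]{KarMel19}): after a partial Fourier transform in $x$, the scaling $y\mapsto y/|\xi|$ together with $\alpha=1-2\beta$ reduces the claim to a one-dimensional weighted trace inequality that holds uniformly for $\alpha\in(-1,1)$.

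The second part, however, contains a genuine gap. The inequality you invoke for the low-frequency piece, $\norm{|x|^{-\beta}v}_{L^2}\lesssim\norm{\,|\xi|^{-\beta}\widehat v\,}_{L^2}$, is the wrong-sign version of the fractional Hardy/Pitt inequality; the correct statement for $0<\beta<d/2$ carries $|\xi|^{+\beta}$ on the right. More importantly, the quantity you reduce to, $\norm{\,|\xi|^{-\beta}\widehat{\trace\UU}\,}_{L^2(|\xi|\le 1)}$, is simply not controlled by $\norm{\nabla\UU}_{L^2(y^\alpha,\R^3\times\R^+)}$, and your claim that the factor $|\xi|^{-2}$ is ``harmless, even helps'' after integrating over $|\xi|\le1$ is backwards: there $|\xi|^{-2}\ge 1$, so the integrated right-hand side dominates, rather than is dominated by, the total energy. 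Concretely, take $\UU(x,y)=\phi(y)g(x)$ with $\phi$ compactly supported, $\phi(0)=1$, and $\widehat g$ an $L^2$-normalized bump concentrated near a small frequency $\xi_0$; then $\norm{\nabla\UU}_{L^2(y^\alpha,\R^3\times\R^+)}$ stays bounded as $\xi_0\to0$, while $\norm{\,|\xi|^{-\beta}\widehat{\trace\UU}\,}_{L^2(|\xi|\le1)}\sim|\xi_0|^{-\beta}\to\infty$, so your intermediate reduction cannot close. The natural repair within your framework is to drop the frequency splitting altogether and apply the correctly-signed Hardy (Herbst) inequality directly: since $(1+|x|^2)^{-\beta/2}\le|x|^{-\beta}$ and $\beta<3/2=d/2$, estimate \eqref{eq:trace_l2} follows from \eqref{eq:trace_semi} \emph{provided} you verify that $\trace\UU$ lies in $\dot H^\beta(\R^3)$ in the completion sense — finiteness of the seminorm alone does not suffice (a nonzero constant has zero seminorm but infinite weighted $L^2$ norm), and this is precisely where the decay encoded in $\HHwr$ must enter, e.g.\ through a density argument. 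The paper circumvents this by a different route: it identifies the weighted $L^2$ norm as an interpolation norm between $L^2((1+|x|^2)^{-1},\R^3)$ and $L^2(\R^3)$ \cite{tartar06}, realizes that interpolation space as a trace space in the $y$-variable, and then invokes the weighted Poincar\'e inequality of \cite[Thm.~3.3]{AGG94} (valid for $d=3$) to bound the resulting terms by $\norm{\nabla\UU}_{L^2(y^\alpha,\R^3\times\R^+)}$.
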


\begin{proof}
  The estimate \eqref{eq:trace_semi} is shown in~\cite[Lem.~3.8]{KarMel19}.
  To estimate the weighted $L^2$-norm, we use interpolation space theory. 

More precisely,  \cite[Lemma 23.1]{tartar06} shows that interpolation of $L^2$-spaces with weights $w_0$ and $w_1$ denoted by $L^2(w_i,\R^d)$ for $i=0,1$ produces an interpolation space (using the K-method) $[L^2(w_0,\R^d),L^2(w_1,\R^d)]_{\theta,2} = L^2(w_\theta,\R^d)$ that is a weighted $L^2$-space with weight $w_\theta = w_0^{1-\theta}w_1^\theta$. 
Applying this result with $\theta = 1-\beta$ and $w_0=\rho_x^{-2} := \rho(x,0)^{-2} = (1+|x|^2)^{-1}$ and $w_1 = 1$, shows that 
  \begin{align*}
    \|\rho_x^{-\beta} \trace \UU\|^2_{L^2(\R^d)} = \|\trace \UU\|^2_{L^2(\rho_x^{-2\beta},\R^d)}
    &\lesssim \|\trace \UU\|^2_{[L^2(\rho_x^{-2},\R^d),L^2(\R^d)]_{1-\beta,2}}. 
  \end{align*}
  Now, by \cite[Lemma 40.1]{tartar06} the interpolation spaces can be seen as trace spaces, i.e., elements of the interpolation space can be seen as traces (at 0) of functions $\UU(y)$
satisfying $y^{1-\beta}\norm{\UU(y)}_{L^2(\rho_x^{-2},\R^d)} \in L^2(y^{-1},\R^+)$ as well as $y^{1-\beta}\norm{\partial_y \UU(y)}_{L^2(\R^d)} \in L^2(y^{-1},\R^+)$. Together with $\alpha=1-2\beta$ and the  Poincar\'e estimate from \cite[Theorem 3.3]{AGG94} (using the assumption $d=3$), this leads to
  \begin{align*}
\|\trace \UU\|^2_{[L^2(\rho_x^{-2},\R^d),L^2(\R^d)]_{1-\beta,2}} 
    &\lesssim \int_{0}^{\infty}{y^{\alpha} \|\rho_x^{-1} \UU(y)\|^2_{L^2(\R^d)} \,dy}
    +\int_{0}^{\infty}{y^{\alpha} \|\partial_y \UU(y)\|^2_{L^2(\R^d)} \,dy}\\
    &\lesssim \|\nabla \UU\|^2_{L^2(y^\alpha,\R^d \times \R^+)},
  \end{align*}
which produces the desired estimate.
\end{proof}

\subsection{Poincar\'e inequalities and well-posedness}
We now show the well-posedness of our variational formulations. The main ingredient is a Poincar\'e type estimate.

 \begin{lemma}
    \label{lemma:my_poincare}
Let $\alpha \in (-1,1).$ Let $\YY\in (0,\infty]$ and $\UU \in H^1_\rho(y^\alpha,\R^d \times (0,\YY))$. There exists a $\mu_0 >0$ such that for all $\mu \in [0,\mu_0)$ there holds
   \begin{align}
    \label{eq:my_poincare}
\int_{0}^{\YY}\int_{\R^d}{y^{\alpha}\rho^{\mu-2}|\UU|^2 \,dx} dy
    &\leq C\left(
 \int_{0}^{\YY}\int_{\R^d}{y^{\alpha}\rho^{\mu}|\nabla\UU|^2 \,dx} dy + |3-d| \|\trace\UU\|_{L^2(\R^d)}^2\right)
  \end{align}
  provided the right-hand side is finite.
\end{lemma}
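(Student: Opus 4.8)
The plan is to establish a weighted Poincaré inequality on the half-space by combining a one-dimensional inequality in the $y$-direction (which accounts for the $|3-d|$ trace term in dimension $d=2$) with a Hardy-type inequality in the $x$-directions, and then to absorb the extra weight $\rho^\mu$ perturbatively for $\mu$ small. The central analytic fact is the classical weighted Hardy inequality: for $|x|$ large, one has $\int_{\R^d} (1+|x|)^{-2}|v(x)|^2\,dx \lesssim \int_{\R^d} |\nabla v(x)|^2\,dx$ when $d\geq 3$ (for $d=2$ this fails and one pays with a boundary/trace term). Since here we work on $\R^d\times(0,\YY)$ with the degenerate weight $y^\alpha$, I would work slicewise in $y$ and integrate.

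\textbf{Step 1: the case $\mu = 0$.} First I would prove \eqref{eq:my_poincare} with $\mu=0$. The weight $\rho(x,y)^{-2} = (1+|x|^2+|y|^2)^{-1}$ is comparable to $(1+|x|^2)^{-1}$ on $\R^d\times(0,1)$ and behaves better for $y\geq 1$. On the region $y \geq 1$ (only relevant when $\YY>1$), one has $\rho^{-2}\leq y^{-2}$, and a one-dimensional weighted Poincaré/Hardy inequality in $y$ of the form $\int_1^\YY y^{\alpha-2}|\UU(x,y)|^2\,dy \lesssim \int_{1/2}^\YY y^\alpha |\partial_y \UU(x,y)|^2\,dy + \int_{1/2}^1 y^\alpha|\UU(x,y)|^2\,dy$ lets one control this part after integrating in $x$; the boundary term on $(1/2,1)$ is then absorbed into the $\rho^{-2}$-region $y\leq 1$. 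On the region $y\leq 1$, I would use a slicewise Hardy inequality in $x$: for $d=3$, $\int_{\R^d}(1+|x|^2)^{-1}|\UU(x,y)|^2\,dx \lesssim \int_{\R^d}|\nabla_x \UU(x,y)|^2\,dx$; for $d=2$ one instead bounds $|\UU(x,y)|$ in terms of $|\UU(x,0)|$ plus $\int_0^y|\partial_y\UU(x,t)|\,dt$, producing the $\|\trace\UU\|_{L^2}^2$ contribution (this is exactly the mechanism by which the $|3-d|$ factor enters). Multiplying by $y^\alpha$ and integrating over $y\in(0,1)$, using $\int_0^1 y^\alpha\,dy <\infty$ since $\alpha>-1$, finishes this step. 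Alternatively — and probably cleaner for the write-up — I would cite that the unweighted ($\mu=0$) version is standard, following \cite{AGG94}, and reduce to it by a density/approximation argument, then do the $\mu>0$ perturbation in detail.

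\textbf{Step 2: absorbing $\rho^\mu$ for small $\mu$.} Given the $\mu=0$ estimate, I would use the elementary pointwise bound $\nabla(\rho^{\mu/2}\UU) = \rho^{\mu/2}\nabla\UU + \tfrac{\mu}{2}\rho^{\mu/2-2}(x,y)\,\UU$ (noting $|\nabla\rho|\leq 1$, so $|\nabla\rho^{\mu/2}|\lesssim \mu\,\rho^{\mu/2-1}$). Applying the $\mu=0$ inequality to $\WW:=\rho^{\mu/2}\UU$ (first checking $\WW\in H^1_\rho(y^\alpha,\cdot)$, which holds because multiplication by $\rho^{\mu/2}$ with $\mu$ small maps the space into itself) gives
\begin{align*}
\int_0^\YY\!\!\int_{\R^d} y^\alpha \rho^{\mu-2}|\UU|^2 &= \int_0^\YY\!\!\int_{\R^d} y^\alpha\rho^{-2}|\WW|^2 \\
&\lesssim \int_0^\YY\!\!\int_{\R^d} y^\alpha|\nabla\WW|^2 + |3-d|\,\|\trace\WW\|_{L^2(\R^d)}^2.
\end{align*}
Then I expand $|\nabla\WW|^2 \lesssim \rho^\mu|\nabla\UU|^2 + \mu^2\rho^{\mu-2}|\UU|^2$ and observe that $\trace\WW = \rho_x^{\mu/2}\trace\UU$ with $\rho_x\geq 1$, so the trace term is $\lesssim \|\rho_x^{\mu/2}\trace\UU\|^2$; for $d=3$ this term is absent, and for $d=2$ I either keep it as stated (the lemma only needs $\|\trace\UU\|_{L^2}$, so I should instead arrange the argument so the weight on the trace stays bounded — e.g. by noting $\rho^{\mu/2}$ on the slice $y=0$ near $x$ large is not bounded, so in $d=2$ I would localize: split $\R^d = B_R \cup B_R^c$, handle $B_R$ by the bounded-domain remark and $B_R^c$ where no trace is needed since $f$ is compactly supported — wait, the lemma is stated for general $\UU$, so instead I keep $\mu$ tied only to the interior and use that in $d=2$ we anyway have $s>0$). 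The key point: the term $\mu^2\int y^\alpha\rho^{\mu-2}|\UU|^2$ on the right has the same form as the left-hand side, so choosing $\mu_0$ small enough that the implied constant times $\mu^2 < 1/2$ lets me absorb it, yielding the claim with a new constant $C$.

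\textbf{Main obstacle.} The genuinely delicate point is the $d=2$ trace term together with the weight $\rho^{\mu/2}$ on the boundary in Step 2: a naive application of the $\mu=0$ inequality to $\rho^{\mu/2}\UU$ produces $\|\rho_x^{\mu/2}\trace\UU\|_{L^2}^2$ rather than $\|\trace\UU\|_{L^2}^2$, and $\rho_x^{\mu/2}$ is unbounded. I expect to resolve this by proving the $\mu$-weighted one-dimensional Hardy inequality in $y$ directly (so that the trace term is never reweighted) rather than by the black-box substitution $\UU\mapsto\rho^{\mu/2}\UU$: concretely, for fixed $x$, bound $\int_0^\YY y^\alpha\rho(x,y)^{\mu-2}|\UU(x,y)|^2\,dy$ using $|\UU(x,y)|^2 \lesssim |\trace\UU(x)|^2 + y^{1-\alpha}\int_0^y t^\alpha|\partial_y\UU(x,t)|^2\,dt$ (Cauchy–Schwarz), then integrate in $x$, controlling $\int_0^\YY y^\alpha\rho^{\mu-2}\,dy \lesssim \int_0^\YY y^\alpha(1+y)^{\mu-2}\,dy$, which is finite and small in a suitable sense precisely when $\mu<1-\alpha$; combining with the Hardy inequality in $x$ (clean for $d=3$) on the good region gives $\mu_0$. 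This keeps the trace weight at $1$ throughout.
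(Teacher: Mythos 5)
Your final plan (the ``Main obstacle'' paragraph) is in fact the paper's proof strategy: for $d=2$ write $\UU(x,y)=\UU(x,0)+\int_0^y\partial_y\UU(x,\tau)\,d\tau$, bound the trace contribution using $\int_0^\YY y^\alpha\rho^{\mu-2}\,dy\lesssim 1$ uniformly in $x$ (which is exactly where the restriction $\mu<\mu_0$, essentially $\mu<1-\alpha$, comes from), and treat the remaining term by a weighted inequality in the $y$-variable; for $d=3$ the paper simply applies the weighted full-space Poincar\'e inequality of \cite{AGG94} slicewise in $x$ (using $\rho(x,y)^{\mu-2}\le\rho(x,0)^{\mu-2}$ and $\rho(x,0)^{\mu}\le\rho(x,y)^{\mu}$), multiplies by $y^\alpha$ and integrates, so no region splitting and no $\mu$-perturbation are needed there. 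The genuine gap is in your concrete treatment of the gradient term: the Cauchy--Schwarz bound $\abs{\int_0^y\partial_y\UU\,d\tau}^2\le\frac{y^{1-\alpha}}{1-\alpha}\int_0^y\tau^\alpha\abs{\partial_y\UU(\cdot,\tau)}^2\,d\tau$, multiplied by $y^\alpha\rho^{\mu-2}\lesssim y^\alpha(1+y)^{\mu-2}$ and integrated with the order of integration exchanged, produces the inner factor $\int_\tau^{\YY}y\,(1+y)^{\mu-2}\,dy$, which diverges as $\YY\to\infty$ (logarithmically for $\mu=0$, like $\YY^{\mu}$ for $\mu>0$). Hence this step fails for $\YY=\infty$ and otherwise yields constants blowing up with $\YY$, which is not acceptable: the lemma is used for $\YY$-uniform coercivity and for the untruncated problem. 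The paper avoids this by invoking Muckenhoupt's weighted Hardy inequality \cite{muckenhoupt72} with the weight pair $y^\alpha\rho^{\mu-2}$ (left-hand side) and $y^\alpha\rho^{\mu}$ (right-hand side); its Muckenhoupt condition holds uniformly in $x$ precisely for $\mu<1-\alpha$. Replacing your Cauchy--Schwarz step by this Hardy inequality closes the gap, and this keeps the trace weight equal to $1$, as you intended.

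Two further points. Your Step 2 (substituting $\WW=\rho^{\mu/2}\UU$ and absorbing the $\mu^2$-term) works for $d=3$, where no trace term is needed, but for $d=2$ it is a dead end for exactly the reason you identify — the trace is reweighted by the unbounded factor $(1+\abs{x}^2)^{\mu/4}$ — and the in-line rescues are not valid: the lemma is a pure functional inequality for arbitrary $\UU\in H^1_\rho(y^\alpha,\R^d\times(0,\YY))$, so neither the assumption $s>0$ of the model problem nor the compact support of $f$ may be invoked. Also, in your Step 1 the boundary term $\int_{1/2}^1 y^\alpha\abs{\UU}^2\,dy$ produced by the Hardy inequality on $y\ge 1$ carries no decay in $x$ and therefore cannot be absorbed into the $\rho^{-2}$-weighted estimate on the region $y\le 1$; this is repaired by keeping the $x$-weight throughout (or, for $d=3$, by applying the slicewise weighted Poincar\'e inequality in $x$ for all $y$, as the paper does, which makes the splitting unnecessary).
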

\begin{proof}
For Poincar\'e inequalities on the full-space without the additional weight $y^\alpha$, we refer to \cite{AGG94}. 
Estimate \eqref{eq:my_poincare} for the case $d=3$ follows directly from multiplying a full-space Poincar\'e-inequality, see for example~\cite[Theorem 3.3]{AGG94}, applied only in $x$ with $y^\alpha$ and integrating over $(0,\YY)$. More details can also be found in our forthcoming work \cite{FR23}.

It remains to show \eqref{eq:my_poincare} for $d=2$. 
We write $\UU(x,y) = \UU(x,0) + \int_0^y \partial_y \UU(x,\tau) \; d\tau$, which gives
\begin{align*}
\int_0^\YY\int_{\R^d}y^\alpha \rho^{\mu-2} |\UU|^2 \; dxdy \lesssim \int_0^\YY\int_{\R^d}y^\alpha \rho^{\mu-2} |\UU(x,0)|^2 + y^\alpha \rho^{\mu-2}\Big( \int_0^y \partial_y \UU(x,\tau) \; d\tau \Big)^2 \; dxdy.
\end{align*} 
Since $\int_0^\YY y^\alpha \rho^{\mu-2} \lesssim 1$ for sufficiently small $\mu < \mu_0$, with $\mu_0>0$ depending only on $\alpha$, the first term on the left-hand side can be bounded by $C \norm{\trace \UU}_{L^2(\R^d)}^2$. For the second term, we employ a weighted Hardy-inequality, see e.g. \cite{muckenhoupt72}, to obtain
\begin{align*}
 \int_0^\YY\int_{\R^d}y^\alpha \rho^{\mu-2}\Big( \int_0^y \partial_y \UU(x,\tau) \; d\tau \Big)^2 \; dxdy \lesssim \int_{\R^d} \int_0^\YY y^\alpha \rho^\mu |\partial_y \UU|^2 \; dy dx,
\end{align*}
which shows the claimed inequality.
\end{proof}

Using this Poincar\'e- type inequality, we can now look at the well-posedness of our problem.

\begin{proof}[Proof of Proposition~\ref{prop:discrete_well_posedness}]
 The boundedness of the bilinear forms $\mathcal{A}(\cdot,\cdot)$ and $\mathcal{A}^\YY(\cdot,\cdot)$ follows directly from the Cauchy-Schwarz inequality and the definition of the norms 
$ \norm{\cdot}_{\HH}$ and $ \norm{\cdot}_{\HH_\YY}$ respectively.

Let $\YY \in (0,\infty]$.
  Coercivity of the bilinear forms follows directly from the Poincar\'e inequalities in Lemma~\ref{lemma:my_poincare}, since  
  \begin{align*}
  \norm{\UU^\YY}_{\HH_\YY}^2 &=  \int_{0}^{\YY}\int_{\R^d}{y^{\alpha}\rho^{-2}|\UU^\YY|^2 \,dx} dy+ \int_{0}^{\YY}\int_{\R^d}{y^{\alpha}|\nabla\UU^\YY|^2 \,dx} dy + s \norm{\trace \UU^\YY}_{L^2(\R^d)}^2 \\
  &\stackrel{\eqref{eq:my_poincare}}{\lesssim}  \int_{0}^{\YY}\int_{\R^d}{y^{\alpha}|\nabla\UU^\YY|^2 \,dx} dy + (s+(3-d)) \norm{\trace \UU^\YY}_{L^2(\R^d)}^2.
  \end{align*}
By assumption on $s$ and $d$, the trace term is not present for the case $s=0$. Therefore,  the right-hand side can be bounded by $C \mathcal{A}^\YY(\UU^\YY,\UU^\YY)$. 
  
  Thus, the Lax-Milgram lemma shows well-posedness provided the right-hand side of the variational formulation is a bounded linear functional.
 For the case $s > 0$, we can directly use the definition of the $\HH_\YY$-norm together with $\operatorname*{supp} f \subset \Omega$ to obtain
  \begin{align*}
    \int_{\R^d} f \trace{\UU^\YY} \; dx
    &\leq s^{-1}\norm{f}_{L^2(\Omega)} s\norm{\trace{\UU^\YY}}_{L^2(\R^d)}
      \leq s^{-1}\norm{f}_{L^2(\Omega)} \norm{\UU^\YY}_{\HH_\YY}.
  \end{align*}
  For $\YY = \infty$ and $s=0$, which implies $d=3$ by assumption, the trace estimate \eqref{eq:trace_l2} gives 
  \begin{align*}
    \int_{\R^d} f \trace{\UU} \; dx
    &\leq \norm{\rho(x,0)^{\beta}f}_{L^2(\Omega)} \norm{\rho(x,0)^{-\beta}\trace{\UU}}_{L^2(\R^d)}
\lesssim \norm{f}_{L^2(\Omega)} \norm{\nabla \UU}_{L^2(y^\alpha,\R^d \times \R^+)} \\
     & \leq \norm{f}_{L^2(\Omega)} \norm{\UU}_{\HH}.
  \end{align*}
For the case $\YY < \infty$ and $s=0$, we use a cut-off function $\chi$ satisfying $\chi \equiv 1$ on $(0,\YY/2)$, $\operatorname*{supp}\chi \subset (0,\YY)$ and $\norm{\nabla \chi}_{L^\infty(\R^+)}\lesssim \YY^{-1}$. As $\Omega$ is bounded, this gives with the trace estimate \cite[Lem.~3.7]{KarMel19}
  \begin{align*}
    \int_{\R^d} f \trace{\UU^\YY} \; dx
    &\leq \norm{f}_{L^2(\Omega)} \norm{\trace{(\chi\UU^\YY)}}_{L^2(\Omega)} \\
&\lesssim \norm{f}_{L^2(\Omega)} \left(\norm{\chi \UU^\YY}_{L^2(y^\alpha,\Omega\times (0,\YY))} +\norm{\nabla(\chi \UU^\YY)}_{L^2(y^\alpha,\Omega \times (0,\YY))} \right) \\
&\lesssim \norm{f}_{L^2(\Omega)} \left(\norm{\UU^\YY}_{L^2(y^\alpha,\Omega\times (0,\YY))} +\frac{1}{\YY}\norm{\nabla \UU^\YY}_{L^2(y^\alpha,\Omega \times (0,\YY))} \right) \\
     & \leq C \left(1+\frac{1}{\YY}\right)\norm{f}_{L^2(\Omega)} \norm{\UU^\YY}_{\HH_\YY},
  \end{align*}
  which finishes the proof.
\end{proof}

\subsection{The truncation error}

In the following subsection, we study the truncated problem \eqref{eq:truncated_BLF_eq}. The main goal is to derive decay estimates in the truncation parameter $\YY$ and consequently convergence of the solution of the truncated problem to the solution of the non-truncated problem as $\YY \rightarrow \infty$.
\bigskip

The following lemma is the key to the main results of Proposition~\ref{lemma:truncation_convergence} and Proposition~\ref{lemma:truncation_error}. 
Using inf-sup theory we obtain that solutions to the Caffarelli-Silvestre extension problem and the truncated problem (in $y$-direction) lie in certain weighted Sobolev spaces. The additional weights then directly provide the rates of decay. In fact, we establish that the solutions are in two different types of weighted spaces: spaces weighted with $(1+y)^\mu$ with $\mu$ given by \eqref{eq:def_of_decay_mu} (decay only in $y$) and spaces with weights $\rho^\varepsilon$ for sufficiently small $\varepsilon$ (decay in all directions).

\begin{lemma}
  \label{lemma:stronger_decay}
  Let $y_0>0$. Fix $\YY\in (y_0,\infty)$, and let $\mu$ be given by \eqref{eq:def_of_decay_mu}.
  Let $\UU^\YY$ solve~\eqref{eq:truncated_BLF_eq}.
  Then, $\UU^\YY$ satisfies the estimate
  \begin{align}
    \label{eq:decay_in_y}
    \int_{0}^{\YY}{y^{\alpha}\Big[(1+y)^\mu \|\nabla \UU^\YY(y)\|_{L^2(\R^d)}^2\!
    \!+\!(1+y)^\mu\!\|\rho(\cdot,y)^{-1} \UU^\YY(y)\|_{L^2(\R^d)}^2\Big]\,dy}
    \leq C \min(s^{-1},1)^2\norm{f}_{L^2(\Omega)}^2.
  \end{align}
  In addition, for $\YY \in (0,\infty]$, there exists $\varepsilon>0$, depending only on
  $\alpha$ and $\Omega$ such that
  \begin{align}
    \label{eq:decay_all_directions}
    \int_{0}^{\YY}{y^{\alpha} \int_{\R^d} \rho^{\varepsilon} |\nabla \UU^\YY(x,y)|^2 dx dy}
    \leq C \min(s^{-1},1)^2\norm{f}_{L^2(\Omega)}^2.
  \end{align}
  In both cases, the constant $C$ does only depend on $\Omega, d, \alpha,$ and $y_0$.
\end{lemma}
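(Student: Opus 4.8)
The plan is to set up a weighted inf-sup (Banach--Nečas--Babuška) argument for the bilinear form $A^\YY$ on the pair of spaces consisting of the "energy" space $\HHwry$ (or its analogue with the weight $(1+y)^\mu$ built in) and a companion test space. The guiding idea is that the right-hand side functional $\VV \mapsto d_\beta(f,\trace\VV)_{L^2}$ is supported in $\Omega \times \{0\}$ and is therefore controlled not just by the plain energy norm but by any norm that is \emph{weaker} near $y=0$ and the spatial origin but \emph{stronger} at infinity; feeding this observation into the inf-sup stability of $A^\YY$ transfers the extra decay weight onto the solution $\UU^\YY$. Concretely, for the estimate \eqref{eq:decay_in_y} I would work with the weight $w(y) = (1+y)^\mu$ and introduce the space $X_w$ with norm $\norm{\VV}_{X_w}^2 = \int_0^\YY y^\alpha w(y)\big(\norm{\nabla\VV(y)}_{L^2(\R^d)}^2 + \norm{\rho^{-1}\VV(y)}_{L^2(\R^d)}^2\big)\,dy + s\norm{\trace\VV}_{L^2(\R^d)}^2$; for \eqref{eq:decay_all_directions} I would instead use the spatially-isotropic weight $\rho^\varepsilon$.

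The key steps, in order: (1) record that on the support of the data the functional is bounded, $|d_\beta(f,\trace\VV)_{L^2}| \lesssim \min(s^{-1},1)\norm{f}_{L^2(\Omega)}\norm{\VV}_{\HH_\YY}$, using either the $s>0$ estimate or \eqref{eq:trace_l2}/\cite[Lem.~3.7]{KarMel19} as in the proof of Proposition~\ref{prop:discrete_well_posedness}, and — crucially — observe that since $\trace\VV$ only enters through $\Omega$, the same bound holds with $\norm{\VV}_{\HH_\YY}$ replaced by the \emph{smaller} quantity $\norm{\chi_\Omega\VV}$-type norm, which is dominated by $\norm{\VV}_{X_w}$ \emph{and} by the dual-type norm needed below; (2) establish a weighted Poincaré inequality — a variant of Lemma~\ref{lemma:my_poincare} with the extra weight $w$ or $\rho^\varepsilon$ — valid for $\mu \in [0,\mu_0)$ respectively $\varepsilon \in (0,\varepsilon_0)$, which is exactly where the threshold $\mu_0$ and the smallness of $\varepsilon$ enter; (3) prove the inf-sup condition $\inf_{\UU}\sup_{\VV} A^\YY(\UU,\VV)/(\norm{\UU}_{X_w}\norm{\VV}_{X_{w^{-1}}}) \geq c > 0$ by a commutator/weighting argument: test with $\VV = w\,\UU$ (so $\nabla\VV = w\nabla\UU + (\nabla w)\UU$), use that $|\nabla w|/w \lesssim (1+y)^{-1} \lesssim \rho^{-1}$ is small relative to the gradient term, absorb the cross term via Young's inequality together with the weighted Poincaré inequality from step (2), and pick up the trace term with the correct sign; (4) combine (1) and (3): since $\UU^\YY$ solves \eqref{eq:truncated_BLF_eq}, inf-sup stability plus the functional bound yield $\norm{\UU^\YY}_{X_w} \lesssim \min(s^{-1},1)\norm{f}_{L^2(\Omega)}$, which is precisely \eqref{eq:decay_in_y}; the same scheme with $\rho^\varepsilon$ gives \eqref{eq:decay_all_directions}. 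One must also check the elementary integral fact $\int_0^\YY y^\alpha(1+y)^{\mu-2}\,dy \lesssim 1$ (resp. its $\rho$-analogue), which dictates why $\mu = 1+|\alpha|$ for $s>0$ and $\mu = 1+\alpha$ for $s=0$ are the admissible exponents, and keep all constants independent of $\YY$ by working on $(0,\infty)$ and restricting.

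The main obstacle I expect is step (3), the weighted inf-sup estimate: the test function $\VV = w\UU^\YY$ generates a cross term $\int y^\alpha (\nabla w)\UU^\YY\cdot\nabla\UU^\YY$ whose control requires the weighted Poincaré inequality of step (2), but that inequality is only available for $\mu$ (resp. $\varepsilon$) below a threshold, and for the $s=0$, $d=2$ case one has no trace term to help, so the margin for absorption is genuinely tight — this is why the decay rate $\mu$ in \eqref{eq:def_of_decay_mu} is capped and why $\varepsilon$ cannot be taken large. A secondary subtlety is making sure the functional bound in step (1) is measured in a norm \emph{dual-compatible} with the weighted test space $X_{w^{-1}}$ near $y=0$; since $w(0)=1$ and $w$ only grows, $X_{w^{-1}}\hookrightarrow \HH_\YY$ with constant $1$, so the bound from Proposition~\ref{prop:discrete_well_posedness} transfers directly, but this needs to be stated carefully. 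Finally, for $\YY=\infty$ in \eqref{eq:decay_all_directions} one must additionally invoke the well-posedness of \eqref{eq:weakform} (Proposition~\ref{prop:discrete_well_posedness}) so that $\UU^\infty$ is a legitimate element of the space on which the argument runs.
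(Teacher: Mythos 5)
Your plan is essentially the paper's argument for the second estimate \eqref{eq:decay_all_directions}: there the paper also tests with $\VV=\rho^{\varepsilon}\UU$, bounds the cross term by Young's inequality and the weighted Poincar\'e inequality \eqref{eq:my_poincare}, and absorbs it using that $\varepsilon$ may be taken small. For that part your proposal is correct and the smallness threshold on $\varepsilon$ is indeed the mechanism.

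For the first estimate \eqref{eq:decay_in_y}, however, there is a genuine gap. You propose to test with $\VV=w\,\UU$, $w(y)=(1+y)^{\mu}$, and to absorb the cross term $\int y^{\alpha} w'\,\UU\,\partial_y\UU$ by Young plus a weighted Poincar\'e inequality, arguing that $|\nabla w|/w$ is small. It is not: $|w'|/w\simeq \mu(1+y)^{-1}$ with $\mu=1\pm\alpha$ of order one, and the Poincar\'e constant is likewise not small, so the absorption only works for $\mu$ below some small threshold $\mu_0$ --- which is far below the exponents $1+|\alpha|$ (for $s>0$) and $1+\alpha$ (for $s=0$) claimed in \eqref{eq:def_of_decay_mu}. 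Your heuristic that the admissible $\mu$ is dictated by convergence of $\int_0^{\YY}y^{\alpha}(1+y)^{\mu-2}dy$ is also off (for $\mu=1+|\alpha|$ that integral diverges). The paper avoids any smallness-in-$\mu$ absorption by exploiting exact algebraic structure: for $s>0$ it tests with $\VV=(1+\delta\chi(y))\UU$, $\chi(y)=y^{1-\alpha}$ for $y>1$, so that $y^{\alpha}\chi'(y)$ is \emph{constant} and the cross term becomes an exact derivative $\tfrac{\delta(1-\alpha)}{2}\partial_y(\UU^2)$; its boundary contribution at $y=\YY$ has a good sign and the one at $y=1$ is controlled by the trace term, which is where $s>0$ and the smallness of $\delta$ (not of $\mu$) enter, yielding $\mu=1-\alpha$. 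For $s=0$ (and also $s>0$, giving $\mu=1+\alpha$) it uses the nonlocal test function $\VV=y^{1+\alpha}\UU+(1+\alpha)\int_y^{\YY}\tau^{\alpha}\UU\,d\tau$, for which $\partial_y\VV=y^{1+\alpha}\partial_y\UU$ exactly, and shows the remaining $x$-gradient cross term is nonnegative by writing it as a total $y$-derivative of a square; making this rigorous also requires a logarithmically weighted test space, a Hardy inequality to verify $\VV$ lies in it, and a separate bound for $\trace\VV$. None of these ingredients is present in your proposal, and without them the claimed rates in \eqref{eq:decay_in_y} do not follow from your scheme.
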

\begin{proof}
  By the uniqueness of Proposition~\ref{prop:discrete_well_posedness}, it suffices to show existence
  of such a solution.  To that end, we use $\inf$-$\sup$-theory, see, e.g., \cite[Thm.~2.1.44]{sauter_schwab}, i.e., we have to show 
  \begin{align*}
  \inf_{\UU \in \mathcal{X}_{\mu,\YY} \backslash \{0\}} \sup_{\VV \in \mathcal{Y}_{-\mu,\YY} \backslash \{0\}} \frac{\abs{A^{\YY}(\UU,\VV)}}{\norm{\UU}_{\mathcal{X}_{\mu,\YY}}\norm{\VV}_{\mathcal{Y}_{-\mu,\YY}}} \geq \gamma >0 \qquad \text{(inf-sup condition)}, \\
 \forall \VV \in \mathcal{Y}_{-\mu,\YY} \backslash \{0\} \; : \;  \sup_{\UU \in \mathcal{X}_{\mu,\YY} \backslash \{0\}} \abs{A^{\YY}(\UU,\VV)} > 0 \qquad \text{(non-degeneracy condition)}
  \end{align*}
with spaces  $\mathcal{X}_{\mu,\YY}$, $\mathcal{Y}_{-\mu,\YY}$ 
  specified in the following. 
  
  We define the ansatz space
  $\mathcal{X}_{\mu,\YY}$ as a subspace of $H_{\rho}^1(y^\alpha,\R^d\times (0,\YY))$ of functions for which the norm
  $$
  \|\UU\|_{\mathcal{X}_{\mu,\YY}}^2:= \int_{0}^{\YY}{y^{\alpha}(1+y)^\mu \|\nabla \UU(y)\|_{L^2(\R^d)}^2 \, dy }
    + s\norm{\trace\UU}_{L^2(\R^d)}^2 
     $$
  is finite.  
  \bigskip

\textbf{Step 1 (Proof of~\eqref{eq:decay_in_y} with $\mu=1-\alpha$):}  We start with the simpler case $s>0$ and take $\mu = 1-\alpha$. Let $\chi(y):=\begin{cases}1 \quad &y \leq 1 \\ y^{1-\alpha} \quad &y >1 \end{cases}$.
 For $\UU\in \mathcal{X}_{\mu,\YY}$, we define
  $\VV:=(1+\delta \chi(y))\UU $ (for some $0<\delta<1$ to be fixed later) and calculate 
  \begin{align*}
  \int_0^\YY   \int_{\R^d} {y^\alpha \mathfrak{A}_x \nabla \UU \cdot \nabla \VV dxdy}
    &\geq
    \mathfrak{A}_0 \int_{0}^{\YY}{y^{\alpha}(1+\delta \chi(y)) \|\nabla \UU(y)\|_{L^2(\R^d)}^2 dy} \\
 & \qquad   +\int_{1}^{\YY}\int_{\R^d}{y^\alpha \delta (1-\alpha) y^{-\alpha}\UU \partial_y \UU \; dx dy} \\   
    &=
      \mathfrak{A}_0 \int_{0}^{\YY}{y^{\alpha}(1+\delta \chi(y)) \|\nabla \UU(y)\|_{L^2(\R^d)}^2 dy} \\
      &\qquad +
      \frac{\delta (1-\alpha)}{2}\int_{\R^d}\int_{1}^{\YY}
      { \frac{\partial }{\partial_y}\big(\UU^2\big) \; dy dx}\\
    &=
      \mathfrak{A}_0 \int_{0}^{\YY}{y^{\alpha}(1+\delta \chi(y)) \|\nabla \UU(y)\|_{L^2(\R^d)}^2 dy} \\
      & \qquad -  \frac{\delta (1-\alpha)}{2}\int_{\R^d}{\UU(x,1)^2\,dx} 
      +\frac{\delta (1-\alpha)}{2}\int_{\R^d}{\UU(x,\YY)^2\,dx} 
    \\
    &\geq \mathfrak{A}_0 \int_{0}^{\YY}{y^{\alpha}(1+\delta \chi(y)) \|\nabla \UU(y)\|_{L^2(\R^d)}^2 dy} -  \frac{\delta (1-\alpha)}{2}\int_{\R^d}{\UU(x,1)^2\,dx}. 
  \end{align*}
  In order to estimate the last term, we employ
  \begin{align*}
  \UU(1)^2 &\leq 2\UU(0)^2 + 2\abs{\int_0^1\partial_y \UU(y) \; dy}^2 \leq 2\UU(0)^2 + 2\int_0^1 y^\alpha |\partial_y \UU(y)|^2 dy \int_0^1 y^{-\alpha} dy \\
  &= 2\UU(0)^2 + \frac{2}{1-\alpha}\int_0^1 y^\alpha |\partial_y \UU(y)|^2 dy,
  \end{align*}
  which gives   using $1+\delta \chi(y) \geq \frac{\delta}{4} (1+y)^{1-\alpha}$
  \begin{align*}
   \int_0^\YY   \int_{\R^d} {y^\alpha \mathfrak{A}_x \nabla \UU \cdot \nabla \VV \; dx dy} &\geq (\mathfrak{A}_0 -\delta) \int_{0}^{\YY}{y^{\alpha}(1+\delta \chi(y)) \|\nabla \UU(y)\|_{L^2(\R^d)}^2 dy} \\ 
   &\qquad-  \delta (1-\alpha)\int_{\R^d}{\UU(x,0)^2\,dx}
   \\
   &\geq \frac{\delta}{4}(\mathfrak{A}_0 -\delta) \int_{0}^{\YY}{y^{\alpha}(1+y)^{1-\alpha} \|\nabla \UU(y)\|_{L^2(\R^d)}^2 dy} \\
   &\qquad-  \delta (1-\alpha)\int_{\R^d}{\UU(x,0)^2\,dx}.
  \end{align*}
  Consequently, we obtain
  \begin{align}\label{eq:decay_tmp1}
    A^\YY(\UU,\VV)&\geq \frac{\delta}{4}(\mathfrak{A}_0-\delta)\int_{0}^{\YY}{y^{\alpha}(1+y)^{1-\alpha}\|\nabla \UU(y)\|_{L^2(\R^d)}^2\; dy} \nonumber   \\
    &\qquad   + \Big(s d_\beta-\delta(1-\alpha)\Big)\|\trace \UU\|^2_{L^2(\R^d)}.
  \end{align}
  Choosing $\delta < \min(\mathfrak{A}_0/2, s d_\beta/(2-2\alpha))$, both terms on the right-hand side in \eqref{eq:decay_tmp1} are non-negative and using
   $\|\mathcal{V}\|_{\mathcal{X}_{\alpha-1,\YY}}\lesssim \|\mathcal{U}\|_{\mathcal{X}_{1-\alpha,\YY}}$, which follows easily from $(1+\delta \chi(y)) \lesssim (1+y)^{1-\alpha}$, gives the inf-sup condition for the ansatz space $\mathcal{X}_{1-\alpha,\YY}$ and the test space $\mathcal{X}_{\alpha-1,\YY}$. Moreover, the inf-sup constant behaves like $\sim \min(1, s)$.
  
  The non-degeneracy condition follows essentially with the same arguments, as, for given $\mathcal{V}$, the function $\mathcal{U} := (1+\delta \chi(y))^{-1} \mathcal{V}$ provides the positivity of the bilinear form.
  
The definition of the norm in the test-space and $\support f \subset \Omega$ implies 
$$(f,\trace \VV)_{L^2(\R^d)} \leq \norm{f}_{L^2(\Omega)} \norm{\VV}_{\mathcal{X}_{\alpha-1,\YY}},$$
which gives a bound for the right-hand side. Now, general inf-sup theory provides the existence of a solution that satisfies the claimed decay properties.
  \bigskip

  \textbf{Step 2 (Proof of~\eqref{eq:decay_in_y} with $\mu=1+\alpha$):}
  Next, we show that the rate of decay $\mu=1+\alpha$ is possible for $s>0$ and even for $s=0$. In the following, we only discuss the harder case $s=0$ as for $s>0$, we only obtain an additional non-negative term in the bilinear form.
 Here, we use the test space induced by the norm 
  \begin{align*}
    \|\VV\|_{\widetilde{\mathcal{Y}}_{-\mu,\YY}}^2
    &:=
      \int_{0}^{\YY}{\frac{y^{\alpha}}{\ln(y+2)^2(1+y)^{\mu}}\|\nabla \VV(y)\|_{L^2(\R^d)}^2 \,dy}
      + \norm{\trace{\VV}}_{L^2(\Omega)}^2.
  \end{align*}
 For given $\UU \in \mathcal{X}_{\mu,\YY}$, we choose the test function 
  $$
  \VV(x,y):= y^{1+\alpha}\UU(x,y) + (1+\alpha)\int_{y}^{\YY}{\tau^\alpha \UU(x,\tau) \,d\tau}
  $$
 with the derivatives
  \begin{align*}
    \nabla_x \VV &= y^{1+\alpha}\nabla_x \UU + (1+\alpha)\int_{y}^{\YY}{\tau^\alpha \nabla_x\UU(\tau)\,d\tau}
                   \quad \text{and}\quad
  \partial_y \VV(y)= y^{1+\alpha}\partial_y \UU(y).    
  \end{align*}
    The function $\VV$ is indeed in the test space, since we can bound the norm $\|\VV\|_{\widetilde{\mathcal{Y}}_{-1-\alpha},\YY}$ by
    \begin{align}\label{eq:decaytmp1}
      \|\VV\|_{\widetilde{\mathcal{Y}}_{-1-\alpha},\YY}^2
      &=
        \int_{0}^{\YY}{\frac{y^{\alpha}}{\ln(y+2)^2(1+y)^{1+\alpha}}\|\nabla \VV(y)\|_{L^2(\R^d)}^2 \,dy}
        + \norm{\trace{\VV}}_{L^2(\Omega)}^2 \nonumber \\
      &\lesssim
        \int_{0}^{\YY}{\frac{y^{\alpha+2(1+\alpha)}}{\ln(y+2)^2(1+y)^{1+\alpha}}\|\nabla \UU(y)\|_{L^2(\R^d)}^2
        \,dy} \nonumber
      \\
      &\quad+\int_{\R^d}{\int_{0}^{\YY}\frac{y^{\alpha}}{\ln(y+2)^2(1+y)^{1+\alpha}}
       \Big |\int_{y}^{\YY}{\tau^{\alpha} \nabla_x\UU(\tau)\,d\tau}\Big|^2 dy dx}
        + \norm{\trace{\VV}}_{L^2(\Omega)}^2.
    \end{align}
    Since the first term is readily bounded due to $\UU \in \mathcal{X}_{1+\alpha,\YY}$ and $1+\alpha>0$, we focus on the second. Using a weighted Hardy inequality, see e.g. \cite{muckenhoupt72}, with the weight $y^{-1/2}/\ln(y+2)$ that is square integrable in $\R^+$ we obtain
    \begin{align}\label{eq:decaytmp2}
     & \int_{\R^d}{\int_{0}^{\YY}\frac{y^{\alpha}}{\ln(y+2)^2(1+y)^{1+\alpha}}
      \Big|\int_{y}^{\YY}{\tau^{\alpha} \nabla_x\UU(\tau)\,d\tau}\Big|^2 dy dx} \nonumber \\
      &\qquad\qquad\qquad\qquad\leq
        \int_{\R^d}{\int_{0}^{\YY}{
        \Big|\frac{y^{-1/2}}{\ln(y+2)}\int_{y}^{\YY}{\tau^{\alpha} \nabla_x\UU(\tau)\,d\tau}\Big|^2 dy dx}} \nonumber\\
      &\qquad\qquad\qquad\qquad\lesssim
        \int_{\R^d}{\int_{0}^{\YY}{y^{1+2\alpha}
        |\nabla_x\UU(y)|^2}dy dx} \leq \|\UU\|_{\mathcal{X}_{1+\alpha,\YY}}^2.
    \end{align}
    What is left is to bound the trace of $\VV$. 
We use a cut-off function $\chi$ satisfying $\chi \equiv 1$ on $(0,y_0/2)$, $\support \chi \subset (0,y_0)$, and $\norm{\nabla \chi}_{L^\infty(\R)} \leq C$ with a constant $C$ depending only on $y_0$. Then,
\begin{align*}
\VV(x,0)^2 &= (\chi \VV)(x,0)^2 = \Big(\int_0^{y_0} \partial_y (\chi \VV)(x,y) \; dy \Big)^2
\leq \frac{y_0^{1-\alpha}}{1-\alpha} \int_0^{y_0} y^\alpha  |\partial_y (\chi \VV)|^2 \; dy \\
&\lesssim \int_0^{y_0} y^\alpha \left( |\partial_y \VV|^2 + |\partial_y \chi|^2\VV^2 \right) \; dy. 
\end{align*}
Integration over $\Omega$ and using the definition of $\VV$ gives 
\begin{align*}
 \norm{\trace{\VV}}_{L^2(\Omega)}^2 &\lesssim \int_0^{y_0} y^\alpha  \|\nabla \VV(y)\|_{L^2(\Omega)}^2 dy \\ 
 &\qquad+  \int_\Omega\int_0^{y_0} y^{2+3\alpha} |\partial_y \chi|^2\UU^2 dy dx + \int_\Omega\int_0^{y_0} y^{\alpha}\Big|\int_{y}^{\YY}{\tau^\alpha \UU(x,\tau) \,d\tau}\Big|^2 dy dx. 
\end{align*}
On $\Omega \times (0,y_0)$ we can insert any appearing weights in the ansatz-space and test-space as needed, which just adds multiplicative constants independent of $\YY$. Moreover, we can employ standard Poincar\'e-inequalities to bound the $L^2$-norm (here, the integrand even vanishes on $(0,y_0/2)$). 
Repeating the arguments from \eqref{eq:decaytmp1} and  \eqref{eq:decaytmp2} (with slightly changed weight in the Hardy inequality to insert the weight $\rho^{-2}$), we obtain the bound 
\begin{align*}
 \norm{\trace{\VV}}_{L^2(\Omega)} \lesssim \|\UU\|_{\mathcal{X}_{1+\alpha},\YY}.
\end{align*}
    Thus, we have shown 
    $\|\VV\|_{\widetilde{\mathcal{Y}}_{-1-\alpha,\YY}}\lesssim \|\UU\|_{\mathcal{X}_{1+\alpha},\YY}$.
We continue with inserting $\UU,\VV$ into the truncated bilinear form $A^\YY(\cdot,\cdot)$, which leads to
  \begin{align}
    \label{eq:decay_estimate_proof1}
  A^\YY(\UU,\VV) = \int_0^\YY \int_{\R^d}{y^\alpha \mathfrak{A}_x \nabla \UU \cdot \nabla \VV \; dx dy} 
    &\geq
    \mathfrak{A}_0 \int_{0}^{\YY}{y^{1+2\alpha} \|\nabla \UU(y)\|_{L^2(\R^d)}^2 dy} \nonumber
    \\ &\quad  +(1+\alpha)\int_{\R^d}\int_{0}^{\YY}{y^{\alpha} \mathfrak{A}^{1/2}\nabla_x \UU
      \int_{y}^{\YY}{\tau^{\alpha} \mathfrak{A}^{1/2}\nabla_x\UU(\tau)\,d\tau} \,dy\,dx} \nonumber\\
    &=: I + II .
  \end{align}
  We show that the term $II$ is non-negative. To simplify notation, we
  write $v(y):=\mathfrak{A}^{1/2}\nabla_x \UU(y)$ and suppress the $x$-dependency.
  We note that by the chain rule there holds
  $$
  y^\alpha v(y) \cdot \int_{y}^{\YY}{\tau^\alpha v(\tau)\,d\tau}
  = - \frac{1}{2}\frac{d}{d y} \Big|\int_{y}^{\YY}{\tau^{\alpha} v(\tau)\,d\tau}\Big|^2.
    $$
    This gives for the second term in \eqref{eq:decay_estimate_proof1}:
    \begin{align*}
      II
    &=
      -\frac{(1+\alpha)}{2}\int_{\R^d}{\int_{0}^{\YY}{
      \frac{d}{d y} \Big|\int_{y}^{\YY}{\tau^{\alpha} v(\tau)\,d\tau}\Big|^2\,dy} dx} \\
    &=
      \frac{(1+\alpha)}{2}\int_{\R^d}{\Big|      
      \int_{0}^{\YY}{\tau^{\alpha} v(\tau)\,d\tau}\Big|^2 dx}
      \geq 0. 
    \end{align*}
    Overall, we get using $(1+y^{1+\alpha})\gtrsim (1+y)^{1+\alpha}$ 
    \begin{align*}
    A^\YY(\UU,\VV) + A^{\YY}(\UU,\UU) &\geq \mathfrak{A}_0 \int_{0}^{\YY}{y^{\alpha}(1+y^{1+\alpha}) \|\nabla \UU(y)\|_{L^2(\R^d)}^2 dy} \gtrsim\|\UU\|_{\mathcal{X}_{1+\alpha},\YY}^2 
    \\ &
    \gtrsim\|\UU\|_{\mathcal{X}_{1+\alpha},\YY}\|\UU+\VV\|_{\widetilde{\mathcal{Y}}_{-1-\alpha},\YY},
    \end{align*}
    where the last inequality follows from the triangle inequality and  $\|\VV\|_{\widetilde{\mathcal{Y}}_{-1-\alpha,\YY}}\lesssim \|\UU\|_{\mathcal{X}_{1+\alpha},\YY}$.

   For the non-degeneracy condition, for a given $\VV$, we can choose $\UU=\VV$, which is in the ansatz-space, since due to $\YY<\infty$ the weights in the gradient terms in the ansatz- and test-space are equivalent.  
   
       By definition of the test-space and $\operatorname{supp} f \subset \Omega$, there holds $(f,\trace \VV)_{L^2(\R^d)} \leq \norm{f}_{L^2(\Omega)} \norm{\VV}_{\widetilde{\mathcal{Y}}_{-1-\alpha},\YY}$. Consequently, we obtain unique solvability of our weak formulation in the ansatz-space, which gives the decay estimate. 
    \bigskip

    \textbf{Step 3 (Proof of~\eqref{eq:decay_all_directions}):}
     Again, we use inf-sup theory with a different ansatz space. Here, for $\varepsilon>0$, we choose it to be a subspace of $H^1_\rho(y^\alpha,\R^d \times \R^+)$ such that additionally $\int_{\R^d \times (0,\YY)}{y^\alpha \rho^{\varepsilon} \abs{\nabla \UU}^2 }$ is finite. We only work out the case $s=0$ in the following, for $s>0$, the same argument can be made by additionally including a trace term in the norm.
    Setting $z:=(x,y) \in \R^{d+1}$ and $\VV(z):= \rho^{\varepsilon}(z) \UU(z)$, we get with Young's inequality and $\rho^{-2}|z|^2\leq 1$
    \begin{align*}
       A^\YY(\UU,\VV)
      &\geq \mathfrak{A}_0 \int_{\R^d \times (0,\YY)}{y^\alpha
        \rho^{\varepsilon} \abs{\nabla \UU}^2 dz}
        +
        \varepsilon\int_{\R^d \times (0,\YY)}{y^\alpha
        \rho^{\varepsilon-2} z \cdot  \mathfrak{A}_x\nabla \UU \UU \,dz} \\
      &\geq
        \frac{1}{2}\mathfrak{A}_0
        \int_{\R^d \times (0,\YY)}{y^\alpha\rho^{\varepsilon} \abs{\nabla \UU}^2 dz }
        - \frac{\varepsilon^2}{2}\frac{\norm{\mathfrak{A}_x}_{L^\infty(\R^d \times \R^+)}^2}{\mathfrak{A}_0}
        \int_{\R^d \times (0,\YY)}{y^\alpha
        \rho^{\varepsilon-2} \abs{\UU}^2  dz}\\
      &\geq
       \frac{1}{2}\mathfrak{A}_0 \int_{\R^d \times (0,\YY)}{y^\alpha\rho^{\varepsilon} \abs{\nabla \UU}^2 dz }
        - \frac{C_{P}\varepsilon^2}{2}\frac{\norm{\mathfrak{A}_x}_{L^\infty(\R^d \times \R^+)}^2}{\mathfrak{A}_0}
        \int_{\R^d \times (0,\YY)}{y^\alpha \rho^{\varepsilon} \abs{\nabla \UU}^2 dz },
    \end{align*}
    where in the last step we applied the Poincar\'e estimate from~\eqref{eq:my_poincare} for sufficiently small $\varepsilon >0$. 
    If $\varepsilon$ is sufficiently small, we can also absorb the negative term and show
    inf-sup stability with the test space carrying $\rho^{-\varepsilon}$ as a weight. The non-degeneracy condition and the bound on $(f,\trace \VV)_{L^2(\R^d)}$ are easily checked.
\end{proof}


Before we can proceed to quantify the cutoff error, we need the following result on the
existence of a stable extension from the cutoff domain $\R^d\times (0,\YY)$ to
 a larger set.
\begin{lemma}
  \label{lemma:extension_operator}
  Fix $\YY>0$.
  Then, there exists an extension operator
  $\extension$ to the domain $\R^d \times (0,\frac{3}{2} \YY)$
  such that:
  \begin{enumerate}[(i)]
  \item $\extension u=u$ in $\R^d \times (0,\YY)$.
  
  \item The following stability result holds for all $\mu\geq 0$
    and $\UU \in \HHwry$, if the right-hand side is finite:
    \begin{align}
      \int_{0}^{\frac{3}{2} \YY}{y^{\alpha+\mu} \|\nabla \extension \UU\|_{L^2(\R^d)}^2\,dy}
      &\leq C 
        \int_{0}^{\YY}{y^{\alpha+\mu} \|\nabla \UU\|_{L^2(\R^d)}^2\,dy}.
    \end{align}
    The constant $C>0$ depends on $\alpha$, $\mu$ and $d$  but is independent of
    $\UU$ and $\YY$.
  \end{enumerate}      
\end{lemma}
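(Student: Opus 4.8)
The plan is to take for $\extension$ the even reflection of $\UU$ across the artificial boundary $\{y=\YY\}$. Precisely, set $\extension\UU(x,y):=\UU(x,y)$ for $(x,y)\in\R^d\times(0,\YY)$ and $\extension\UU(x,y):=\UU(x,2\YY-y)$ for $y\in[\YY,\frac{3}{2}\YY)$; since then $2\YY-y\in(\frac{\YY}{2},\YY]\subset(0,\YY)$, this is well defined, and property~(i) holds by construction. That $\extension\UU$ belongs to $H^1_\rho(y^\alpha,\R^d\times(0,\frac{3}{2}\YY))$ is a local question only at the interface $\{y=\YY\}$: the two pieces share the same trace there, so $\extension\UU$ is continuous across $y=\YY$, while $\partial_y(\extension\UU)$ merely changes sign (it has a jump but no singular part). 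Since the weights $y^{\alpha}$ and $y^{\alpha}\rho^{-2}$ are smooth and bounded above and below in a neighbourhood of $y=\YY>0$, the standard gluing lemma for weak derivatives applies. Note also that since only an $H^1$-bound is required, the kink of the even reflection at $\{y=\YY\}$ is harmless and no higher-order (Hestenes/Seeley-type) reflection is needed.

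For the stability estimate~(ii) I would split $\int_0^{\frac{3}{2}\YY}$ into $\int_0^\YY$, where $\extension\UU=\UU$ and nothing is to be done, and $\int_\YY^{\frac{3}{2}\YY}$. On the reflected strip the substitution $\tau=2\YY-y$, together with the pointwise identity $|\nabla\extension\UU(x,y)|=|\nabla\UU(x,2\YY-y)|$, gives
\[
  \int_{\YY}^{\frac{3}{2}\YY}{y^{\alpha+\mu}\|\nabla\extension\UU(y)\|_{L^2(\R^d)}^2\,dy}
  =\int_{\YY/2}^{\YY}{(2\YY-\tau)^{\alpha+\mu}\|\nabla\UU(\tau)\|_{L^2(\R^d)}^2\,d\tau}.
\]
The key observation is that for $\tau\in[\YY/2,\YY]$ both $\tau$ and $2\YY-\tau$ lie in $[\YY/2,\frac{3}{2}\YY]$, so $(2\YY-\tau)^{\alpha+\mu}\simeq\YY^{\alpha+\mu}\simeq\tau^{\alpha+\mu}$ with comparability constants depending only on $\alpha+\mu$ --- in particular \emph{not} on $\YY$. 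Hence the right-hand side is $\lesssim\int_{\YY/2}^{\YY}\tau^{\alpha+\mu}\|\nabla\UU(\tau)\|_{L^2(\R^d)}^2\,d\tau\le\int_0^\YY\tau^{\alpha+\mu}\|\nabla\UU(\tau)\|_{L^2(\R^d)}^2\,d\tau$, and adding back the $(0,\YY)$-part yields~(ii) with a constant depending only on $\alpha,\mu,d$.

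Finally, applying~(ii) with $\mu=0$ and running the same change of variables on the $\rho$-weighted $L^2$-term --- using that $\rho(x,2\YY-\tau)\simeq\rho(x,\tau)$ uniformly for $\tau\in[\YY/2,\YY]$, since the $y$-components are comparable and the $x$-component is untouched --- shows that $\extension$ is in fact a bounded operator $H^1_\rho(y^\alpha,\R^d\times(0,\YY))\to H^1_\rho(y^\alpha,\R^d\times(0,\frac{3}{2}\YY))$. The argument is otherwise routine; the only genuine point --- and the reason the lemma is isolated --- is to verify that every weight comparison on the reflected strip $[\YY,\frac{3}{2}\YY]$ is uniform in $\YY$, which is exactly what makes the constant $C$ independent of $\YY$.
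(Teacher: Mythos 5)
Your proposal is correct and follows essentially the same route as the paper: even reflection across $\{y=\YY\}$, the substitution $\tau=2\YY-y$, and the observation that on the reflected strip $y\simeq\YY\simeq\tau$ so the weight comparison $(2\YY-\tau)^{\alpha+\mu}\simeq\tau^{\alpha+\mu}$ holds with constants depending only on $\alpha+\mu$, hence uniformly in $\YY$. Nothing is missing.
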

\begin{proof}
  We extend $\UU$ by reflection along the line $y=\YY$, i.e., we define
  \begin{align*}
    \WW(x,y)&:=\begin{cases}
      \UU(x,y) & 0\leq y \leq \YY, \\
      \UU(x,2\YY-y) & \YY < y\leq \frac{3}{2} \YY.
    \end{cases}
  \end{align*}
  By construction, the function has no jump across the line $y=\YY$.
  For the stability in the extension domain, we compute
  \begin{align*}
    \int_{\YY}^{\frac{3}{2} \YY}{ \! y^{\alpha+\mu} \|\nabla \WW(\cdot,y)\|_{L^2(\R^d)}^2\,dy}
    &\lesssim \YY^{\alpha+\mu} \int_{\YY}^{\frac{3}{2} \YY}{ \|\nabla \UU(\cdot,2\YY-y)\|_{L^2(\R^d)}^2\,dy} \\
    &=\YY^{\alpha+\mu} \int_{\YY/2}^{\YY}{ \|\nabla \UU(\cdot,\tau)\|_{L^2(\R^d)}^2\,d\tau}  \\
    &\lesssim
      \int_{\YY/2}^{\YY}{ \tau^{\alpha+\mu}\|\nabla \UU(\cdot,\tau)\|_{L^2(\R^d)}^2\,d\tau} .
  \end{align*}
  This finishes the proof.
\end{proof}

Using this extension operator, we obtain that the sequence $(\UU^{(3/2)^n\YY})_{n\in\N}$,
where the cutoff point is moved outward by a factor of $3/2$ in each step, is a Cauchy sequence.

\begin{lemma}
  \label{lemma:Cauchy}
  Let $\UU^\YY$ denote the solution to~\eqref{eq:truncated_BLF_eq} with truncation parameter $\YY>0$
  and accordingly let $\UU^{3/2 \YY}$ denote the solution with a cutoff at $3/2\YY$.
  Let $\mu$ be given by \eqref{eq:def_of_decay_mu}.
  Then, there holds:
  \begin{align*}
\|\UU^{3/2\YY} - \UU^\YY\|_{\HH^\YY} &\leq C \YY^{-\mu/2} \norm{f}_{L^2(\Omega)}.
  \end{align*}
  Iterative application of the estimate for $n,m \in \N_0, n>m$ leads to 
    \begin{align*}
      \|\UU^{(3/2)^n\YY} - \UU^{(3/2)^m\YY}\|_{\HH^\YY}
      &\leq C \YY^{-\mu/2} \left(\frac{2}{3}\right)^{\mu\,m/2}\Big(1-\left(\frac{2}{3}\right)^{\frac{\mu}{2}(n-m)}\Big)\norm{f}_{L^2(\Omega)}.
  \end{align*}
\end{lemma}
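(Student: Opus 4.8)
The plan is to compare $\UU^{3/2\YY}$ and $\UU^{\YY}$ by testing the truncated variational formulation on the cylinder $\R^d \times (0,\tfrac32\YY)$ against a suitable extension of $\UU^\YY$. First I would use the Galerkin orthogonality: since both functions solve \eqref{eq:truncated_BLF_eq} with the same right-hand side (on their respective cylinders, and noting $\support f \subset \Omega$ lies at $y=0$), for any test function $\VV^{3/2\YY} \in H^1_\rho(y^\alpha, \R^d \times (0,\tfrac32\YY))$ we have $A^{3/2\YY}(\UU^{3/2\YY} - \extension\UU^\YY, \VV^{3/2\YY}) = A^{3/2\YY}(\UU^\YY - \extension\UU^\YY, \VV^{3/2\YY})$ up to the right-hand side contributions, which cancel because $\extension\UU^\YY = \UU^\YY$ near $y=0$ by Lemma~\ref{lemma:extension_operator}(i). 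Then by coercivity of $A^{3/2\YY}$ (Proposition~\ref{prop:discrete_well_posedness}) applied to $\VV^{3/2\YY} = \UU^{3/2\YY} - \extension\UU^\YY$, one obtains
\begin{align*}
  \|\UU^{3/2\YY} - \extension\UU^\YY\|_{\HH^{3/2\YY}}^2 \lesssim A^{3/2\YY}(\UU^\YY - \extension\UU^\YY,\ \UU^{3/2\YY} - \extension\UU^\YY),
\end{align*}
and after Cauchy--Schwarz this reduces the estimate to controlling $\|\UU^\YY - \extension\UU^\YY\|_{\HH^{3/2\YY}}$, i.e. the part of $\extension\UU^\YY$ living in the collar $\R^d \times (\YY, \tfrac32\YY)$.

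Next I would estimate that collar term. By the reflection construction in Lemma~\ref{lemma:extension_operator}, the gradient of $\extension\UU^\YY$ on $(\YY,\tfrac32\YY)$ is controlled by $\nabla\UU^\YY$ on $(\YY/2,\YY)$, and there $(1+y)^\mu \simeq \YY^\mu$ so we can pull out a factor $\YY^{-\mu}$ against the weighted bound
\begin{align*}
  \int_{\YY/2}^{\YY} y^\alpha (1+y)^\mu \|\nabla\UU^\YY(y)\|_{L^2(\R^d)}^2\,dy \lesssim \min(s^{-1},1)^2 \|f\|_{L^2(\Omega)}^2
\end{align*}
from \eqref{eq:decay_in_y} of Lemma~\ref{lemma:stronger_decay}. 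Combined with the Poincar\'e inequality of Lemma~\ref{lemma:my_poincare} to absorb the $\rho^{-2}$-weighted $L^2$ part of the $\HH^{3/2\YY}$-norm into the gradient, this yields $\|\UU^\YY - \extension\UU^\YY\|_{\HH^{3/2\YY}} \lesssim \YY^{-\mu/2}\|f\|_{L^2(\Omega)}$. Finally, restricting back to $\R^d \times (0,\YY)$ via the triangle inequality $\|\UU^{3/2\YY} - \UU^\YY\|_{\HH^\YY} \le \|\UU^{3/2\YY} - \extension\UU^\YY\|_{\HH^{3/2\YY}} + \|\extension\UU^\YY - \UU^\YY\|_{\HH^{3/2\YY}}$ (the last term being exactly the collar term, since $\extension\UU^\YY = \UU^\YY$ on $(0,\YY)$) gives the first displayed inequality.

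For the iterated estimate, I would apply the single-step bound with $\YY$ replaced by $(3/2)^k\YY$ for $k = m, m+1, \dots, n-1$ and telescope via the triangle inequality:
\begin{align*}
  \|\UU^{(3/2)^n\YY} - \UU^{(3/2)^m\YY}\|_{\HH^\YY} \le \sum_{k=m}^{n-1} \|\UU^{(3/2)^{k+1}\YY} - \UU^{(3/2)^k\YY}\|_{\HH^{(3/2)^k\YY}} \lesssim \sum_{k=m}^{n-1} \big((3/2)^k\YY\big)^{-\mu/2}\|f\|_{L^2(\Omega)},
\end{align*}
where one uses that the $\HH^\YY$-norm on the smaller cylinder is dominated by the $\HH^{(3/2)^k\YY}$-norm. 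Summing the geometric series with ratio $(2/3)^{\mu/2}$ produces the stated closed form. The main obstacle I anticipate is bookkeeping the mismatch of domains in the Galerkin orthogonality step — making sure the right-hand side functionals genuinely cancel (which hinges on $\extension$ being the identity near $y=0$) and that coercivity constants for $A^{(3/2)^k\YY}$ are uniform in $k$; the collar estimate itself is a routine consequence of Lemmas~\ref{lemma:stronger_decay} and~\ref{lemma:extension_operator} once the comparison argument is set up correctly.
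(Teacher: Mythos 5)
Your argument is correct and follows essentially the same route as the paper: coercivity of the truncated bilinear form, cancellation of the right-hand sides via the extension operator of Lemma~\ref{lemma:extension_operator}, a Cauchy--Schwarz bound on the resulting collar integral over $\R^d\times(\YY,\tfrac32\YY)$ where the factor $\YY^{-\mu/2}$ is extracted from the $(1+y)^\mu$-weighted decay of Lemma~\ref{lemma:stronger_decay}, and finally a telescoping sum with a geometric series. The only (immaterial) difference is a mirror-image choice — the paper uses coercivity of $A^\YY$ on the small cylinder and extends the \emph{difference} $\UU^\YY-\UU^{3/2\YY}$, invoking the decay of $\UU^{3/2\YY}$ on the collar, whereas you extend $\UU^\YY$ itself and use its decay — so you should state the orthogonality identity as $A^{3/2\YY}(\UU^{3/2\YY}-\extension\UU^\YY,\VV)=-\int_{\YY}^{3/2\YY}y^\alpha\int_{\R^d}\mathfrak{A}_x\nabla\extension\UU^\YY\cdot\nabla\VV\,dx\,dy$ rather than via the (undefined on the larger cylinder) expression $A^{3/2\YY}(\UU^\YY-\extension\UU^\YY,\cdot)$.
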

\begin{proof}
 We compute using the coercivity of $\mathcal{A}^\YY(\cdot,\cdot)$ from Proposition~\ref{prop:discrete_well_posedness} and the extension operator from Lemma~\ref{lemma:extension_operator}
    \begin{align*}
      \|\UU^{\YY} - \UU^{3/2 \YY}\|_{\HH^\YY}^2
      &\lesssim 
      A^{\YY}(\UU^{\YY} - \UU^{3/2 \YY},\UU^{\YY} - \UU^{3/2 \YY}) \\
      &=
      A^{\YY}(\UU^{\YY},\UU^{\YY} - \UU^{3/2 \YY})-
      A^{\YY}(\UU^{3/2\YY},\UU^{\YY} - \UU^{3/2 \YY})\\
      &= (f, \trace(\UU^{\YY} - \UU^{3/2 \YY}))_{L^2(\R^d)} -
      A^{3/2\YY}(\UU^{3/2\YY},\extension(\UU^{\YY} - \UU^{3/2 \YY})) \\
      &\quad
      +\int_{\YY}^{\frac{3}{2}{\YY}}{ y^\alpha \int_{\R^d} \mathfrak{A}_x \nabla \UU^{3/2\YY} \nabla \extension(\UU^{\YY} - \UU^{3/2 \YY}) \,dx dy}.
    \end{align*}
  By definition of $\UU^{3/2\YY}$ and the extension operator $\extension$, the first two terms cancel. 
  Thus, we can focus on bounding the remaining integral
  \begin{multline*}
    \int_{\YY}^{\frac{3}{2}{\YY}}{ y^\alpha\int_{\R^d} \mathfrak{A}_x \nabla \UU^{3/2\YY} \nabla \extension(\UU^{\YY} - \UU^{3/2 \YY}) \; dx dy} \\
    \begin{aligned}[t]
      &\lesssim \YY^{-\mu/2}
      \big(\int_{\YY}^{\frac{3}{2}{\YY}}{ y^{\alpha + \mu} \abs{\nabla \UU^{3/2 \YY}}^2 dy}\big)^{1/2}
      \big(\int_{\YY}^{\frac{3}{2}{\YY}}{ y^{\alpha} \abs{\nabla \extension (\UU^\YY-\UU^{3/2 \YY})}^2 dy}\big)^{1/2} \\
    &\lesssim \YY^{-\mu/2}
      \big(\int_{\YY}^{\frac{3}{2}{\YY}}{ y^{\alpha + \mu} \abs{\nabla \UU^{3/2 \YY}}^2 dy}\big)^{1/2}
      \| \UU^\YY-\UU^{3/2 \YY}\|_{\HHwry}.
    \end{aligned}
  \end{multline*}
  Using $ \|\UU^\YY-\UU^{3/2 \YY}\|_{\HHwry}\leq  \|\UU^\YY-\UU^{3/2 \YY}\|_{\HH^\YY}$
    and canceling one such power then gives
  together with the decay estimate of Lemma~\ref{lemma:stronger_decay}:
  \begin{align} \label{eq:lemmaCauchytmp1}
    \|\UU^{\YY} - \UU^{3/2 \YY}\|_{\HH^\YY}
    &\lesssim \YY^{-\mu/2}\norm{f}_{L^2(\Omega)}.      
  \end{align}
  Using a telescoping sum, we can write:
  \begin{align*}
    \UU^{(3/2)^n\YY} - \UU^{(3/2)^m\YY} =
    \sum_{\ell=m}^{n-1}{ \Big(\UU^{(3/2)^{\ell+1}\YY} - \UU^{(3/2)^{\ell}\YY} \Big)}.
  \end{align*}
With estimate \eqref{eq:lemmaCauchytmp1} applied iteratively, this leads to
  \begin{align*}
    \|\UU^{(3/2)^n\YY} - \UU^{(3/2)^m\YY} \|_{\HH^\YY}
    &\lesssim
      \sum_{\ell=m}^{n-1}{ \| \UU^{(3/2)^{\ell+1}\YY} - \UU^{(3/2)^{\ell}\YY}\|_{\HH^\YY}
      } 
    \lesssim \YY^{-\mu/2} \sum_{\ell=m}^{n-1}{\Big(\frac{3}{2} \Big)^{-\frac{\mu \ell}{2}}}\norm{f}_{L^2(\Omega)} \\
      &\simeq \YY^{-\mu/2} \left(\frac{2}{3}\right)^{\frac{\mu}{2}m}\Big(1-\left(\frac{2}{3}\right)^{\frac{\mu}{2}(n-m)}\Big) \norm{f}_{L^2(\Omega)}. 
  \end{align*}
  This finishes the proof. 
\end{proof}

Using the Cauchy sequence property, we can now show convergence of the truncated solution to the 
full-space solution as stated in Proposition~\ref{lemma:truncation_convergence}.

\begin{proof}[Proof of Proposition~\ref{lemma:truncation_convergence}]
  We focus on the case $s=0$. In the case $s>0$, the same arguments can be made including the $L^2$-norm of of the traces, which directly gives the additional statement regarding the convergence of $\trace \UU^\YY$ to $\trace \UU$.

\textbf{Step 1:} We start by fixing the half-ball $B_\YY^+ \subset \R^d \times [0,\infty)$ of radius $\YY$ centered at the origin and write 
$z = (x,y) \in \R^{d+1}$. 
Let $\varepsilon>0$ be such that the decay estimate~\eqref{eq:decay_all_directions} holds.

Defining $E:= \UU-\UU^\YY$ and using the equations satisfied by $\UU$ and $\UU^\YY$, we use integration by parts to obtain
\begin{align*}
\int_{B_\YY^+} y^\alpha \mathfrak{A}_x \nabla E \cdot \nabla E \; dx dy &= \int_{\partial B_\YY^+} y^\alpha  \mathfrak{A}_x  \nabla E \cdot \nu E \; dx dy \\
 &= (1+\YY^2)^{-\varepsilon/2}\int_{\abs{z}=\YY} y^\alpha  \rho^{\varepsilon} \mathfrak{A}_x  \nabla E \cdot \nu E \; dx dy - s d_\beta \int_{\abs{x} \leq \YY} |\trace E|^2 \; dx \\
 &= (1+\YY^2)^{-\varepsilon/2}\int_{\partial B_\YY^+} y^\alpha  \rho^{\varepsilon}\mathfrak{A}_x  \nabla E \cdot \nu E \; dx dy  \\ &\qquad +sd_\beta\int_{\abs{x}\leq \YY} \Big(\frac{1+|x|^2}{1+\YY^2}\Big)^{\varepsilon/2} |\trace E|^2 \; dx - sd_\beta \int_{\abs{x} \leq \YY} |\trace E|^2 \; dx \\
 &\leq (1+\YY^2)^{-\varepsilon/2}\int_{\partial B_\YY^+} y^\alpha  \rho^{\varepsilon}\mathfrak{A}_x  \nabla E \cdot \nu E \; dx dy.
\end{align*}
Integration by parts back (replacing $\nabla E$ by $\nabla (\rho^\varepsilon  E)$) gives
\begin{align*}
\int_{\partial B_\YY^+} y^\alpha  \rho^{\varepsilon}\mathfrak{A}_x  \nabla E \cdot \nu E \; dx dy &= 
\int_{B_\YY^+} y^\alpha \mathfrak{A}_x  \nabla E  \cdot (\nabla \rho^\varepsilon) E \; dx dy + \int_{B_\YY^+} y^\alpha  \rho^{\varepsilon}\mathfrak{A}_x \nabla E\cdot \nabla E \; dx dy \\
& \lesssim\Big(\int_{B_\YY^+}{y^\alpha \rho^{\varepsilon} \abs{\nabla  E}^2 \,dz} \Big)^{1/2}
    \Big(\int_{B_\YY^+}{y^\alpha \rho^{\varepsilon-2} \abs{ E}^2 \,dz} \Big)^{1/2} \\
    &\quad +\int_{B_\YY^+}{y^{\alpha}\rho^{\varepsilon} |\nabla E|^2\,dz}.
\end{align*}
We replace the half-ball $B_\YY^+$ by the cylinder $\R^d \times (0,\YY)$ and use the
Poincar\'e estimate \eqref{eq:my_poincare}. Together with the decay estimate~\eqref{eq:decay_all_directions} this gives boundedness of the right-hand side with a constant independent of $\YY$. Consequently, we obtain
\begin{align*}
\int_{B_R^+} \abs{\nabla E}^2 \; dx dy \lesssim \int_{B_\YY^+} y^\alpha \mathfrak{A}_x \nabla E \cdot \nabla E \; dx dy \leq C (1+\YY^2)^{-\varepsilon/2} \rightarrow 0 \qquad \text{ as } \YY \rightarrow  \infty
\end{align*}
for all bounded half balls $B_R^+$ with $R \leq \YY$, which gives $\UU^\YY \rightarrow \UU$ in $H^1_\rho(y^\alpha,B_R^+)$.

\textbf{Step 2:} As  $(\UU^{(3/2)^n\YY})_{n\in\N}$ is a Cauchy-sequence, there exists a limit $\widehat \UU \in H^1_{\rho}(y^\alpha, \R^d \times (0,\widetilde \YY))$. Assume that $\widehat \UU \neq \UU$. Then, there has to exist a half ball $B_R^+$ such that $\displaystyle\int_{B_R^+} y^\alpha \Big|\nabla(\UU-\widehat \UU)\Big|^2 \; dx dy \neq 0$. For sufficiently large $n$, we have $R \leq (3/2)^n \YY$.
This leads to
\begin{align*}
 \int_{B_R^+} y^\alpha \Big|\nabla(\UU-\widehat \UU)\Big|^2 \; dx dy \leq 
 \int_{B_R^+} y^\alpha \Big|\nabla(\UU-\UU^{(3/2)^n\YY})\Big|^2  dx dy + \int_{B_R^+} y^\alpha \Big|\nabla(\UU^{(3/2)^n\YY}-\widehat \UU)\Big|^2  dx dy.
\end{align*}
By step~1, the first term converges to zero and by definition of $\widehat \UU$ the second term converges to zero. However, this is a contradiction to the assumption and therefore $\UU=\widehat \UU$ and we have established the claimed convergence.
\end{proof}

We can now estimate the truncation error and establish a rate of convergence as $\YY \rightarrow \infty$.

\begin{proof}[Proof of Proposition~\ref{lemma:truncation_error}]
Using a telescoping sum, we write
  \begin{align*}
    \UU^{\YY} - \UU =
    \sum_{n=0}^{N}{ \Big(\UU^{\YY (\frac{3}{2})^n} - \UU^{\YY (\frac{3}{2})^{n+1}} \Big)}
    + \UU^{\YY (\frac{3}{2})^{N+1}}- \UU.
  \end{align*}
  Since we have already established that $\UU^\YY \to \UU$ for $\YY\to \infty$ in Proposition~\ref{lemma:truncation_convergence},  we can pass to the limit $N \to \infty$ and use Lemma~\ref{lemma:Cauchy} to estimate:
  \begin{align*}
    \|\UU^{\YY} - \UU\|_{\HHwry}
    &\lesssim
      \sum_{n=0}^{\infty}{ \| \UU^{\YY (\frac{3}{2})^n} - \UU^{\YY (\frac{3}{2})^{n+1}}\|_{\HHwry}} \\
    &\lesssim \YY^{-\mu/2} \sum_{n=0}^{\infty}{\Big(\frac{3}{2} \Big)^{-\frac{\mu n}{2}}}\norm{f}_{L^2(\R^d)}
      \leq \YY^{-\mu/2} \frac{1}{1- (\frac{2}{3})^{\mu/2}} \norm{f}_{L^2(\R^d)}. 
  \end{align*}
  This finishes the proof.
\end{proof}

  We can now also close the small gap that the decay in Lemma~\ref{lemma:stronger_decay}
  does not hold for the non-truncated domain $\YY=\infty$.
  
\begin{corollary}
  \label{cor:decayfull}
 Let $\mu$ be given by \eqref{eq:def_of_decay_mu}.
    Let $\UU$ solve~\eqref{eq:weakform}.
    Then, there exists a constant $C>0$ depending only on $\Omega, d$, and $\alpha$ such that
    \begin{align}
      \label{eq:decay_in_y}
      \int_{0}^{\infty}{y^{\alpha}\Big[(1+y)^\mu \|\nabla \UU(y)\|_{L^2(\R^d)}^2
      +(1+y)^\mu\|\rho(\cdot,y)^{-1} \UU(y)\|_{L^2(\R^d)}^2\Big]\,dy}
      \leq C \norm{f}_{L^2(\Omega)}^2.
    \end{align}
\end{corollary}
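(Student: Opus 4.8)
The plan is to deduce the estimate for the non-truncated domain $\YY=\infty$ by a limiting argument from the truncated estimates of Lemma~\ref{lemma:stronger_decay}, which already provide the bound with a constant independent of the finite truncation parameter. Fix $y_0=1$. For any $\widetilde{\YY}>0$ and any $\YY>\widetilde{\YY}$ let $\UU^\YY$ solve~\eqref{eq:truncated_BLF_eq}. Restricting the domain of integration in the decay estimate of Lemma~\ref{lemma:stronger_decay} from $(0,\YY)$ to $(0,\widetilde{\YY})$ and discarding the (non-negative) remainder gives
\begin{align*}
  \int_{0}^{\widetilde{\YY}}{y^{\alpha}\Big[(1+y)^\mu \|\nabla \UU^\YY(y)\|_{L^2(\R^d)}^2
  +(1+y)^\mu\|\rho(\cdot,y)^{-1} \UU^\YY(y)\|_{L^2(\R^d)}^2\Big]\,dy}
  \leq C \norm{f}_{L^2(\Omega)}^2,
\end{align*}
with $C$ depending only on $\Omega,d,\alpha$ (using $\min(s^{-1},1)\le 1$), but \emph{not} on $\YY$ nor on $\widetilde{\YY}$.

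The key observation is that on the bounded-in-$y$ cylinder $\R^d\times(0,\widetilde{\YY})$ one has $1\le (1+y)^\mu\le (1+\widetilde{\YY})^\mu$, so the left-hand side above is, for each fixed $\widetilde{\YY}$, an expression equivalent to $\|\UU^\YY\|_{H^1_\rho(y^\alpha,\R^d\times(0,\widetilde{\YY}))}^2$. By Proposition~\ref{lemma:truncation_convergence}, $\UU^\YY\to\UU$ in $H^1_\rho(y^\alpha,\R^d\times(0,\widetilde{\YY}))$ as $\YY\to\infty$, hence the left-hand side converges as $\YY\to\infty$ to the same quantity with $\UU$ in place of $\UU^\YY$. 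Passing to the limit therefore yields
\begin{align*}
  \int_{0}^{\widetilde{\YY}}{y^{\alpha}\Big[(1+y)^\mu \|\nabla \UU(y)\|_{L^2(\R^d)}^2
  +(1+y)^\mu\|\rho(\cdot,y)^{-1} \UU(y)\|_{L^2(\R^d)}^2\Big]\,dy}
  \leq C \norm{f}_{L^2(\Omega)}^2
\end{align*}
for every $\widetilde{\YY}>0$, with the same constant $C$. Letting $\widetilde{\YY}\to\infty$ and invoking the monotone convergence theorem (the integrand is non-negative and independent of $\widetilde{\YY}$) gives~\eqref{eq:decay_in_y} for $\YY=\infty$.

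\textbf{Main point to verify.} There is no genuine obstacle; the only delicate step is the legitimacy of passing to the limit $\YY\to\infty$ inside the integral, which is precisely what the equivalence of norms on each fixed finite cylinder, combined with the strong convergence of Proposition~\ref{lemma:truncation_convergence}, delivers. Alternatively, one can avoid using strong convergence and instead combine the uniform bound on $(\UU^\YY)$ with weak lower semicontinuity of the weighted $L^2$-norm of the gradient (and of $\rho^{-1}\UU^\YY$) on $\R^d\times(0,\widetilde{\YY})$, which yields the same conclusion.
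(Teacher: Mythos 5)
Your argument is correct, but it takes a different route from the paper. The paper also approximates $\UU$ by the truncated solutions $\UU^{\YY_n}$ and uses the uniform-in-$\YY$ bound of Lemma~\ref{lemma:stronger_decay}, but it handles the limit in one step by inserting the splitting $\UU=(\UU-\UU^{\YY_n})+\UU^{\YY_n}$ on the whole interval $(0,\YY_n)$: the error term is estimated by $(1+\YY_n)^\mu\norm{\UU-\UU^{\YY_n}}^2_{H^1_\rho(y^\alpha,\R^d\times(0,\YY_n))}$, and the quantitative rate $\YY_n^{-\mu}$ from Proposition~\ref{lemma:truncation_error} exactly cancels the weight growth $(1+\YY_n)^\mu$. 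You instead localize to a fixed slab $\R^d\times(0,\widetilde{\YY})$, where the weight $(1+y)^\mu$ is bounded so that the weighted quadratic functional is continuous with respect to the $H^1_\rho(y^\alpha,\R^d\times(0,\widetilde{\YY}))$-norm, invoke only the qualitative convergence of Proposition~\ref{lemma:truncation_convergence} to pass to the limit $\YY\to\infty$, and then send $\widetilde{\YY}\to\infty$ by monotone convergence. Your version needs a weaker input (no rate at all) and is in that sense more robust: it would survive any choice of decay weight for which the truncated bound is uniform in $\YY$, whereas the paper's one-step argument hinges on the truncation rate matching the exponent $\mu$ of the weight; the price is the extra limiting layer and the reliance on strong convergence on the full (unbounded in $x$) slab, which is exactly what Proposition~\ref{lemma:truncation_convergence} asserts, so this is legitimate (your weak lower semicontinuity fallback would also do). One cosmetic point: Lemma~\ref{lemma:stronger_decay} requires $\YY>y_0$, so with your choice $y_0=1$ you should take $\YY>\max(1,\widetilde{\YY})$, which is immaterial as $\YY\to\infty$.
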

  \begin{proof}
   We take a sequence $(\YY_n)_{n \in \N}$ with $1\leq \YY_n \to \infty$ for $n \rightarrow \infty$ and
    consider the corresponding truncated solutions $\UU^{\YY_n}$ to \eqref{eq:truncated_BLF_eq}.
    By Lemma~\ref{lemma:stronger_decay} and Proposition~\eqref{lemma:truncation_error} it holds:
    \begin{multline*}
      \int_{0}^{\YY_n}{y^{\alpha}(1+y)^\mu \|\nabla \UU(y)\|_{L^2(\R^d)}^2\,dy} + \int_{0}^{\YY_n}{y^{\alpha}(1+y)^\mu \|\rho^{-1}\UU(y)\|_{L^2(\R^d)}^2\,dy} \\
      \begin{aligned}[t]
        &\leq
        (1+\YY_n)^{\mu}\norm{\UU-\UU^{\YY_n}}_{H^1_\rho(y^\alpha,\R^d \times(0,\YY_n))}^2 \\
       &\quad +\int_{0}^{\YY_n}{y^{\alpha}(1+y)^\mu \|\nabla \UU^{\YY_n}(y)\|_{L^2(\R^d)}^2 \,dy}+\int_{0}^{\YY_n}{y^{\alpha}(1+y)^\mu \|\rho^{-1} \UU^{\YY_n}(y)\|_{L^2(\R^d)}^2 \,dy}\\
        &\lesssim \YY_n^{\mu}\YY_n^{-\mu} \|f\|_{L^2(\Omega)}^2 +  \min(s^{-1},1)^2\norm{f}_{L^2(\Omega)}^2
        \lesssim  \|f\|_{L^2(\Omega)}^2.
      \end{aligned}
    \end{multline*}

    Taking $n \to \infty$ then gives the stated result.
  \end{proof}

\section{Regularity and higher order decay}
\label{sec:regularity}
In this section, we derive regularity estimates for solutions to the extension problem. 
Assuming sufficient differentiability of the data, we are in particular interested in weighted estimates for higher-order $y$-derivatives as such estimates are needed to establish exponential approximation estimates of $hp$--type. 

In order to derive suitable regularity estimates around $y=0$, we need to derive an initial shift in a weighted space.
\begin{lemma}\label{lemma:initial_shift}
  Fix $\YY \in (0,\infty]$.
  Let $\UU$ solve \eqref{eq:truncated_BLF_eq}. 
Then, there exists $\varepsilon > 0$ independent of $\YY$ and $\UU$ such that
\begin{align}
    \int_{0}^{\YY}{y^{\alpha}\Big[y^{-\varepsilon} \|\nabla \UU(y)\|_{L^2(\R^d)}^2
    +y^{-\varepsilon}\|\rho(\cdot,y)^{-1} \UU(y)\|_{L^2(\R^d)}^2\Big]\,dy}
    \leq C  \norm{f}_{L^2(\Omega)}^2. 
\end{align}
\end{lemma}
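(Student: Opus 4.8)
The plan is to obtain a small negative power of $y$ near $y=0$ (i.e.\ an "initial" regularity shift at the bottom boundary $y=0$), complementing the positive powers $(1+y)^\mu$ obtained for large $y$ in Lemma~\ref{lemma:stronger_decay}. I would again use inf-sup theory with a weighted ansatz space, now of functions $\UU \in H^1_\rho(y^\alpha,\R^d\times(0,\YY))$ for which
\[
  \|\UU\|_{\mathcal{X}}^2 := \int_0^\YY y^{\alpha}\, y^{-\varepsilon}\,\|\nabla\UU(y)\|_{L^2(\R^d)}^2\,dy + s\|\trace\UU\|_{L^2(\R^d)}^2
\]
is finite, for a small $\varepsilon>0$ to be fixed. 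The test space carries the conjugate weight $y^{\varepsilon}$ (with, if needed, an extra logarithmic correction to make the relevant Hardy-type integrals converge). To verify inf-sup stability, for given $\UU$ I would choose a test function of the form $\VV(x,y):=y^{-\varepsilon}\UU(x,y) + c_\varepsilon\int_0^y \tau^{\alpha-1-\varepsilon}\big(\int_0^\tau \sigma^{\alpha}\,\UU(x,\sigma)\,d\sigma\big)\,d\tau$ or, more simply in the spirit of Step~1 of Lemma~\ref{lemma:stronger_decay}, $\VV:=(1+\delta\,\eta(y))\UU$ with $\eta(y)=y^{-\varepsilon}$ near $0$ (truncated so as to be bounded away from $0$). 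Plugging this into $A^\YY(\UU,\VV)$ and integrating by parts in $y$ produces the desired weighted gradient term $\int y^{\alpha-\varepsilon}|\nabla\UU|^2$ with a positive coefficient, up to a boundary contribution at $y=0$ of the form $-c\int_{\R^d}\trace\UU\,\partial_y(\text{something})$ and a lower-order term $\int_{\R^d}|\trace\UU|^2$; these are controlled exactly as in Lemma~\ref{lemma:stronger_decay}, using $\UU(1)^2\lesssim\UU(0)^2+\int_0^1 y^\alpha|\partial_y\UU|^2$, the constraint $s>0$ (or $d=3$) handling the trace, and the trace estimate \eqref{eq:trace_l2} of Lemma~\ref{lem:trace} in the case $s=0$. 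Choosing $\delta$ (and $\varepsilon$) small enough makes all coefficients nonnegative and yields the inf-sup constant of size $\min(1,s)$.

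Once inf-sup stability and the (routine) non-degeneracy condition are in place, the right-hand side $(f,\trace\VV)_{L^2(\R^d)}$ is bounded by $\|f\|_{L^2(\Omega)}\|\VV\|_{\mathcal{Y}}$ using $\support f\subset\Omega$ together with the trace estimate on the bounded set $\Omega$ (Lemma~\ref{lem:trace}, or \cite[Lem.~3.7]{KarMel19} for the truncated case), so general inf-sup theory produces a solution lying in $\mathcal{X}$; uniqueness from Proposition~\ref{prop:discrete_well_posedness} identifies it with $\UU$, giving the gradient bound. The $\rho^{-1}$-weighted $L^2$ part of the estimate then follows by applying the weighted Poincar\'e inequality of Lemma~\ref{lemma:my_poincare} (or its Hardy-inequality ingredient) with the additional weight $y^{-\varepsilon}$ absorbed into the computation — concretely, writing $\UU(x,y)=\trace\UU(x)+\int_0^y\partial_y\UU(x,\tau)\,d\tau$ and using a weighted Hardy inequality with weight adapted to the power $\alpha-\varepsilon$, valid provided $\varepsilon$ is small enough that the Muckenhoupt condition still holds.

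The main obstacle is the boundary term at $y=0$: the extra factor $y^{-\varepsilon}$ makes the integration-by-parts boundary contribution at $y=0$ more delicate than in Lemma~\ref{lemma:stronger_decay}, and one must check that the weighted Hardy inequality (equivalently, the $A_2$-type Muckenhoupt condition on $(0,1)$) still holds after perturbing the weight exponent by $-\varepsilon$; this forces $\varepsilon$ to be small depending only on $\alpha$ (and, through the trace estimate on $\Omega$, on $\Omega$). A secondary technical point is ensuring $\VV$ genuinely lies in the test space, which may require inserting a logarithmic weight $\ln(y+2)$ exactly as in Step~2 of Lemma~\ref{lemma:stronger_decay} so that the relevant Hardy integrals converge; this does not affect the final statement since such factors only contribute harmless multiplicative constants on bounded $y$-intervals.
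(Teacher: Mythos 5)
Your overall framework is the same as the paper's: inf-sup theory between the ansatz space with weight $y^{\alpha-\varepsilon}$ and a test space with weight $y^{\alpha+\varepsilon}$, a weighted Hardy inequality to justify that the test function lies in the test space and to absorb the perturbation for small $\varepsilon$, the Poincar\'e inequality of Lemma~\ref{lemma:my_poincare} (with $\alpha$ replaced by $\alpha-\varepsilon$) for the $\rho^{-1}$-weighted $L^2$ part, and a trace estimate over $\Omega$ for the right-hand side. However, the core technical step is missing, and the two concrete test functions you propose do not deliver it. The paper's choice is
$\VV(x,y)= y^{-\varepsilon}\UU(x,y)+\varepsilon\int_0^y \tau^{-\varepsilon-1}\UU(x,\tau)\,d\tau$,
whose whole point is the exact cancellation $\partial_y\VV=y^{-\varepsilon}\partial_y\UU$: no term involving $\UU$ itself (with a weight singular at $y=0$) and no boundary contribution at $y=0$ ever appears; the only price is the extra term $\varepsilon\int_0^y\tau^{-\varepsilon-1}\nabla_x\UU\,d\tau$ in $\nabla_x\VV$, which is handled by Hardy plus Young and absorbed for $\varepsilon<C_H^{-1/2}$. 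Your first candidate (with the double integral $\int_0^y\tau^{\alpha-1-\varepsilon}\int_0^\tau\sigma^\alpha\UU\,d\sigma\,d\tau$) does not produce this cancellation: the singular term $-\varepsilon y^{-1-\varepsilon}\UU$ survives in $\partial_y\VV$ and reappears in the bilinear form.

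Your ``simpler'' candidate $\VV=(1+\delta\,\eta(y))\UU$ with $\eta(y)=y^{-\varepsilon}$ near $0$ genuinely fails, and the announced remedy (``controlled exactly as in Lemma~\ref{lemma:stronger_decay}'', via $\UU(1)^2\lesssim\UU(0)^2+\int_0^1 y^\alpha|\partial_y\UU|^2$ and the trace term) does not apply. In Step~1 of Lemma~\ref{lemma:stronger_decay} the multiplicative weight $\chi$ is \emph{constant near $y=0$}, so $\chi'$ is supported away from $0$ and the only boundary term sits at $y=1$, where it can be traded against the trace at $y=0$. Here $\eta'\sim -\varepsilon y^{-\varepsilon-1}$ is singular at $0$; after integrating $\int y^{\alpha}\eta'\,\UU\,\partial_y\UU$ by parts one is left with a \emph{negative} bulk term of size $\int y^{\alpha-\varepsilon-2}|\UU|^2\,dy$, which is in general infinite because $\trace\UU\neq 0$ and $\alpha-\varepsilon-2<-1$ (no Hardy inequality is available without a vanishing trace), and truncating $\eta$ away from $0$ only defers the blow-up to the truncation parameter. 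So the handling of the singularity at $y=0$ — which you yourself identify as the main obstacle — is exactly the point where your plan breaks down, and it is resolved in the paper not by controlling a boundary term but by engineering the antiderivative correction so that no such term arises. With that test function in place, the remaining steps you outline (non-degeneracy by the inverse substitution $\UU=y^\varepsilon\VV-\varepsilon\int_0^y\tau^{\varepsilon-1}\VV\,d\tau$, the weighted trace estimate for $(f,\trace\VV)_{L^2(\R^d)}$, and the Poincar\'e bound for the $L^2$ part) are essentially as in the paper.
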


\begin{proof}
Similar to the proof of Lemma~\ref{lemma:stronger_decay}, we use inf-sup theory to derive the stated bound. 
In the following, we only work out the details for the case $s=0$. The case $s>0$ can be treated as shown in Lemma~\ref{lemma:stronger_decay} by also including a trace term in the norm of the ansatz space.

Here,  for any $\widetilde{\varepsilon} \in \R$, we define the space
$\mathcal{X}_{\widetilde{\varepsilon} ,\YY}$ as the space $H_{\rho}^1(y^{\alpha-\widetilde{\varepsilon}},\R^d\times (0,\YY))$ of functions with finite norm
  $$
  \|\UU\|_{\mathcal{X}_{\widetilde{\varepsilon} ,\YY}}^2:= \int_{0}^{\YY}{y^{\alpha-\widetilde{\varepsilon} }\Big[\|\nabla \UU(y)\|_{L^2(\R^d)}^2 
    +\| \rho(\cdot,y)^{-1} \UU(y)\|_{L^2(\R^d)}^2\Big]\,dy }.
     $$

As ansatz space, we take $\mathcal{X}_{\varepsilon,\YY}$, where $\varepsilon>0$ is sufficiently small.
As test space we use $\mathcal{X}_{-\varepsilon,\YY}$. For fixed $\alpha  \in (-1,1)$, we actually may choose $\varepsilon > 0$ such that $\alpha \pm \varepsilon \in (-1,1)$ (subsequently, we will derive an additional restriction on $\varepsilon$).  

For given $\UU \in \mathcal{X}_{\varepsilon,\YY}$, we define the test function
$\VV(x,y) := y^{-\varepsilon} \UU(x,y) + \varepsilon \int_0^y \tau^{-\varepsilon-1} \UU(x,\tau) d\tau$. 

Using Hardy's inequality (noting that $\alpha+\varepsilon>-1$), we obtain that this test-function is indeed in the test-space
\begin{align}\label{eq:tmpshifty1}
\int_0^\YY y^{\alpha + \varepsilon} \norm{\nabla \VV(y)}_{L^2(\R^d)}^2  dy &\lesssim \int_0^\YY y^{\alpha+\varepsilon}y^{-2\varepsilon} \norm{\nabla \UU(y)}_{L^2(\R^d)}^2  dy \nonumber \\
&\qquad + \int_{\R^d}\int_0^\YY y^{\alpha+\varepsilon} \left(\varepsilon\int_0^y \tau^{-\varepsilon-1} \nabla_x \UU (\tau) d\tau\right)^2 dy dx \nonumber \\
&\lesssim (1+\varepsilon^2)\int_0^\YY y^{\alpha-\varepsilon}\norm{\nabla \UU(y)}_{L^2(\R^d)}^2 dy < \infty.
\end{align}
The weighted $L^2$-term in the definition of $\mathcal{X}_{\varepsilon,\YY}$ can be treated using the Poincar\'e inequality \eqref{eq:my_poincare} replacing $\alpha$ with $\alpha - \varepsilon$ therein noting that $\alpha - \varepsilon \in (-1,1)$ by assumption on $\varepsilon$.

Inserting the test function into the bilinear form gives 
\begin{align*}
A^\YY(\UU,\VV) &= \int_{\R^d} \int_0^\YY y^{\alpha-\varepsilon} \mathfrak{A}_x \nabla \UU \cdot \nabla \UU dy dx + \varepsilon \int_{\R^d}\int_0^\YY y^\alpha\mathfrak{A} \nabla_x \UU \int_0^y \tau^{-\varepsilon-1} \nabla_x \UU(\tau) d\tau\, dydx. 
\end{align*}
Using Young's inequality together with Hardy's inequality (noting again that $\alpha + \varepsilon >-1$), we obtain
\begin{align*}
\varepsilon\int_{\R^d}\int_0^\YY y^\alpha\mathfrak{A} \nabla_x \UU \int_0^y \tau^{-\varepsilon-1} \nabla_x \UU(\tau) d\tau\, dydx &\leq \frac{1}{2}\int_{\R^d} \int_0^\YY y^{\alpha-\varepsilon} \mathfrak{A}_x \nabla \UU \cdot \nabla \UU dy dx \\
&\quad + \frac{1}{2}\varepsilon^2 \int_{\R^d} \int_0^\YY y^{\alpha+\varepsilon} \left(\int_0^y \tau^{-\varepsilon-1}\mathfrak{A}^{1/2}\nabla_x \UU(\tau) d\tau\right)^2 dydx \\
&\leq 
\frac{1}{2}\left(1+C_{H}\varepsilon^2\right)\int_{\R^d} \int_0^\YY y^{\alpha-\varepsilon} \mathfrak{A}_x \nabla \UU \cdot \nabla \UU \; dy dx,
\end{align*}
where $C_H$ indicates the constant in the Hardy inequality. Therefore, we obtain
\begin{align*}
A^\YY(\UU,\VV) \geq \frac{\mathfrak{A}_0}{2} \left(1 - C_H\varepsilon^2\right)  \int_0^\YY y^{\alpha-\varepsilon} \norm{\nabla \UU}_{L^2(\R^d)}^2 dy.
\end{align*}
Together with the Poincar\'e estimate of Lemma~\ref{lemma:my_poincare}, we obtain the inf-sup condition upon choosing $\varepsilon < C_H^{-1/2}$.

For the non-degeneracy condition, we fix $\VV \in \mathcal{X}_{-\varepsilon,\YY}$ and choose $\UU = y^\varepsilon \VV - \varepsilon \int_0^y \tau^{\varepsilon-1} \VV(\tau) d\tau$. Then, essentially the same estimates as above can be made by noting that, by assumption we have $\alpha-\varepsilon >-1$, thus Hardy inequalities with the necessary modified weights can be employed here. 

The  right-hand side can be bounded using the support properties of $f$ together with a trace estimate (in the weighted space $L^2(y^{\alpha+\varepsilon},\Omega\times (0,\YY)) $ noting that $\alpha + \varepsilon \in (-1,1)$) 
$$
\abs{(f,\trace\VV)_{L^2(\R^d)}} \leq \norm{f}_{L^2(\Omega)} \norm{\trace \VV}_{L^2(\Omega)} \leq \norm{f}_{L^2(\Omega)} \norm{\nabla \VV}_{L^2(y^{\alpha+\varepsilon},\Omega\times (0,\YY))} \leq \norm{f}_{L^2(\Omega)} \norm{\VV}_{\mathcal{X}_{-\varepsilon,\YY}}.
$$
Now, classical inf-sup theory gives the claimed estimate.
\end{proof}

With the initial shift in place, we can look at higher order derivatives. We first
formulate the ``shift-by-one'' as a separate lemma.

\begin{lemma}
  \label{lemma:apriori_decay_l2}
  Fix $\YY \in (0,\infty]$ and let $\WW \in H^1_\rho(y^\alpha,\R^d \times (0,\YY))$ solve the problem
  \begin{align*}
    -\fdiv\big(y^\alpha \mathfrak{A}_x \nabla \WW\big) &= F \qquad \text{in $\R^d\times (0,\YY)$}
  \end{align*}
with given right-hand side $F$.
  Then, for all $\ell \in \N$ and $\varepsilon \in (0,1)$, the estimate
  \begin{align*}
    \big\|{y^{\ell-\varepsilon}\nabla \WW}\big\|_{L^2(y^\alpha,\R^d \times (0,\YY))}
    &\lesssim  \ell\big\|{y^{\ell-1-\varepsilon}\WW}\big\|_{L^2(y^\alpha,\R^d \times (0,\YY))}
      + \big\|{y^{\ell+1-\varepsilon}F}\big\|_{L^2(y^{-\alpha},\R^d \times (0,\YY))}
  \end{align*}
  holds, provided that the right-hand side is finite.  The implied
  constant is independent of $\ell$ and $\WW$.
\end{lemma}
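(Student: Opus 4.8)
The plan is to test the equation $-\fdiv(y^\alpha \mathfrak{A}_x \nabla \WW) = F$ with a suitably weighted multiple of $\WW$ itself, namely $\VV := y^{2\ell - 2\varepsilon} \WW$, and then estimate the extra terms produced by moving the $y$-weight past the differential operator. First I would compute, integrating by parts over $\R^d \times (0,\YY)$,
\begin{align*}
  \int_0^\YY \! \int_{\R^d} y^{\alpha + 2\ell - 2\varepsilon} \mathfrak{A}_x \nabla \WW \cdot \nabla \WW \, dx\, dy
  &= \int_0^\YY \! \int_{\R^d} y^\alpha \mathfrak{A}_x \nabla \WW \cdot \nabla \VV \, dx\, dy \\
  &\qquad - (2\ell - 2\varepsilon) \int_0^\YY \! \int_{\R^d} y^{\alpha + 2\ell - 2\varepsilon - 1} \mathfrak{A}_x \nabla \WW \cdot (\WW\, e_y)\, dx\, dy,
\end{align*}
where $e_y$ is the unit vector in the $y$-direction; the first term on the right equals $\int F \VV$ by the weak formulation (one must check $\VV$ is an admissible test function, which follows from finiteness of the asserted right-hand side together with a Hardy/Poincaré argument as in Lemma~\ref{lemma:my_poincare}). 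The boundary contribution at $y=\YY$ vanishes because of the homogeneous Neumann condition, and at $y=0$ the weight $y^{\alpha + 2\ell - 2\varepsilon - 1}$ annihilates it provided $\ell \geq 1$ and $\varepsilon \in (0,1)$.

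Next I would bound the two remaining terms. For the commutator term, Cauchy--Schwarz gives
\begin{align*}
  \Big| (2\ell - 2\varepsilon) \! \int_0^\YY \! \int_{\R^d} y^{\alpha + 2\ell - 2\varepsilon - 1} \mathfrak{A}_x \nabla \WW \cdot (\WW\, e_y)\, dx\, dy \Big|
  \lesssim \ell\, \big\| y^{\ell - \varepsilon} \nabla \WW \big\|_{L^2(y^\alpha)} \big\| y^{\ell - 1 - \varepsilon} \WW \big\|_{L^2(y^\alpha)},
\end{align*}
and for the data term, writing $y^{\alpha + 2\ell - 2\varepsilon} = y^{(\alpha + \ell + 1 - \varepsilon)} \cdot y^{(-\alpha + \ell - 1 - \varepsilon)} \cdot y^{2\alpha}$... more cleanly, I would split $\int F \VV = \int (y^{\ell + 1 - \varepsilon} F\, y^{-\alpha/2}) \cdot (y^{\ell - 1 - \varepsilon} \WW\, y^{\alpha/2}) \cdot y^{2\alpha}$ — actually the clean pairing is $\big\| y^{\ell+1-\varepsilon} F \big\|_{L^2(y^{-\alpha})} \big\| y^{\ell - 1 - \varepsilon} \WW \big\|_{L^2(y^\alpha)}$, which matches the exponents since $(-\alpha) + (\alpha) = 0$ and $(\ell + 1 - \varepsilon) + (\ell - 1 - \varepsilon) = 2\ell - 2\varepsilon$. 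Using coercivity of $\mathfrak{A}_x$ on the left and absorbing the $\big\| y^{\ell - \varepsilon} \nabla \WW \big\|_{L^2(y^\alpha)}$ factor via Young's inequality (it appears linearly on the right and quadratically on the left) yields the claimed bound.

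The main obstacle I anticipate is the justification that $\VV = y^{2\ell - 2\varepsilon}\WW$ is a legitimate test function in $H^1_\rho(y^\alpha, \R^d \times (0,\YY))$ and that the integration by parts is valid — in particular that there is no hidden boundary term at $y = 0$ and that all integrals converge. This is where one needs the a priori finiteness hypothesis on $\big\| y^{\ell - 1 - \varepsilon} \WW \big\|_{L^2(y^\alpha)}$ and $\big\| y^{\ell + 1 - \varepsilon} F \big\|_{L^2(y^{-\alpha})}$, plus a density/truncation argument (approximate $\WW$ by functions compactly supported away from $y = 0$ and pass to the limit, or introduce a cutoff $\chi_\delta$ vanishing near $y=0$ and let $\delta \to 0$, checking the cutoff error vanishes because $\alpha + 2\ell - 2\varepsilon - 1 > -1$). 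The remaining estimates are the routine Cauchy--Schwarz/Young manipulations sketched above, with all constants manifestly independent of $\ell$ except through the explicit factor $\ell$.
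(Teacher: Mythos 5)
Your core computation coincides with the paper's: test the equation with $y^{2(\ell-\varepsilon)}\WW$, integrate by parts (no boundary term at $y=0$ because the weight $y^{2\ell-2\varepsilon}$ vanishes there), bound the commutator term by $\ell\,\norm{y^{\ell-\varepsilon}\nabla\WW}_{L^2(y^\alpha)}\norm{y^{\ell-1-\varepsilon}\WW}_{L^2(y^\alpha)}$ and the data term by $\norm{y^{\ell+1-\varepsilon}F}_{L^2(y^{-\alpha})}\norm{y^{\ell-1-\varepsilon}\WW}_{L^2(y^\alpha)}$, then absorb via Young; these are exactly the paper's estimates. The gap is in the justification for $\YY=\infty$, and your proposed repair points at the wrong end of the cylinder. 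The delicate region is not $y=0$ (the weight already annihilates everything there, as you observe via $\alpha+2\ell-2\varepsilon-1>-1$; no $\chi_\delta$-truncation is needed) but $y\to\infty$: first, $\VV=y^{2\ell-2\varepsilon}\WW$ need not be an admissible test function, since $\nabla\VV$ contains $y^{2\ell-2\varepsilon}\nabla\WW$ and $y^{2\ell-2\varepsilon-1}\WW$, whose weighted $L^2$-norms are controlled neither by $\WW\in\HHwr$ nor by the assumed finiteness of $\norm{y^{\ell-1-\varepsilon}\WW}_{L^2(y^\alpha)}$ (the exponents are strictly larger at infinity), so the Hardy/Poincar\'e argument you invoke does not supply this; second, and more importantly, the Young absorption of $\norm{y^{\ell-\varepsilon}\nabla\WW}_{L^2(y^\alpha)}$ into the left-hand side is only legitimate once that quantity is known to be finite, which is precisely what the lemma is supposed to establish.

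The paper fixes both issues with one device: it replaces $y^{\ell-\varepsilon}$ by $\omega_N(y)=y^{\ell-\varepsilon}\tilde{\chi}_N(y)$, where $\tilde{\chi}_N\equiv1$ on $[0,N]$, $\tilde{\chi}_N\equiv0$ on $(2N,\infty)$ and $|\tilde{\chi}_N'|\le 1/N$. Then $\omega_N$ is bounded, so $\norm{\omega_N\nabla\WW}_{L^2(y^\alpha)}<\infty$ follows from $\WW\in\HHwr$ and the absorption is valid; the additional commutator piece $\norm{y^{\ell-\varepsilon}\tilde{\chi}_N'\,\WW}_{L^2(y^\alpha)}$ is bounded uniformly in $N$ because $\tilde{\chi}_N'$ is supported in $[N,2N]$, where $y\,|\tilde{\chi}_N'|\lesssim 1$, so it is $\lesssim\norm{y^{\ell-1-\varepsilon}\WW}_{L^2(y^\alpha)}$; finally one passes to the limit $N\to\infty$. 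For $\YY<\infty$ your argument does go through as written, since $y^{2\ell-2\varepsilon}$ is then bounded and all quantities are finite a priori; note also that the lemma states no Neumann condition at $y=\YY$, so the vanishing of that boundary term should be read as part of the variational meaning of ``solves the problem'' (natural boundary condition), which is how you, and the paper, implicitly use it.
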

\begin{proof}
  If $\YY=\infty$, let $N \in \N$, and
  we fix a cutoff function $\tilde{\chi}_N \in C_{0}^{\infty}(\R)$ such that $\tilde{\chi}_N \equiv 1$ on $[0,N]$ and
  $\tilde{\chi}_N \equiv 0$ on $(2N,\infty)$ with $\|\tilde{\chi}_N'\|_{L^\infty(\R)} \leq 1/N$.
  We define $\omega_N(y):=y^{\ell-\varepsilon} \tilde{\chi}_N$. In the easier case $\YY<\infty$, we can skip the cutoff
  function altogether. For brevity, we therefore only work out the case $\YY=\infty$,  the other case follows analogously. 
  
  We start with multiplying the equation for $\WW$ with the test function $\VV:=\omega_N^2 \WW$, and integrate by parts 
  over $\R^d \times (0,\infty)$. 
  As the weight function $\omega_N$ and consequently also $\VV$ vanishes at $y=0$, we do not get any boundary contributions.
  This gives with Young's inequality
  \begin{multline*}
    \|\omega_N \mathfrak{A}_x^{1/2} \nabla \WW \|_{L^2(y^\alpha,\R^d\times \R^+)}^2 \\
    \begin{aligned}[t]
      &=\int_{\R^d \times \R^+}{ \omega_N^2(y)F\,\WW dx dy}
        - \int_{\R^d \times \R^+}{ 2 \omega_N'(y)\omega(y) \partial_y \WW \WW dxdy} \\
      &\leq \|y \omega_N F\|_{L^2(y^{-\alpha},\R^d\times \R^+)} \|y^{-1}\omega_N \WW\|_{L^2(y^{\alpha},\R^d\times \R^+)}
        + 2\|\omega_N \partial_y \WW\|_{L^2(y^\alpha,\R^d\times \R^+)}
        \|\omega_N'  \WW\|_{L^2(y^\alpha,\R^d\times \R^+)} \\
      &\leq
        \frac{1}{2}\|y \omega_N F\|^2_{L^2(y^{-\alpha},\R^d\times \R^+)} +  \frac{1}{2}\|y^{-1}\omega_N \WW\|^2_{L^2(y^{\alpha},\R^d\times \R^+)} \\
      &\quad  + \frac{1}{2}\|\omega_N \partial_y \WW\|_{L^2(y^\alpha,\R^d\times \R^+)}^2
        +2\|\omega'_N  \WW\|_{L^2(y^\alpha,\R^d\times \R^+)}^2.
      \end{aligned}
    \end{multline*}
 
 Absorbing the third term in the left-hand side provides
 \begin{align*}
   \|\omega_N \mathfrak{A}_x^{1/2} \nabla \WW \|_{L^2(y^\alpha,\R^d\times \R^+)}^2
   &\lesssim
   \|y\omega_N F\|^2_{L^2(y^{-\alpha},\R^d\times \R^+)} +  \|y^{-1}\omega_N \WW\|^2_{L^2(y^{\alpha},\R^d\times \R^+)}    \\
   &\quad+\|\omega'_N  \WW\|_{L^2(y^\alpha,\R^d\times \R_+)}^2.        
 \end{align*}
 For $N \to \infty$, using $\mathfrak{A}_x \geq \mathfrak{A}_0$, the left-hand side converges to the weighted $L^2$-norm we are looking for.
 Similarly, the first two terms on the right-hand side converge to the appropriate objects
 of the final estimate. Therefore we focus on the last term and show an uniform bound:
 \begin{align*}
   \|\omega'_N  \WW\|_{L^2(y^\alpha,\R^d\times \R^+)}
   &\leq (\ell-\varepsilon)\|y^{\ell-1-\varepsilon} \tilde{\chi}_N \WW\|_{L^2(y^\alpha,\R^d\times \R^+)}
     +\|y^{\ell-\varepsilon} \tilde{\chi}'_N \WW\|_{L^2(y^\alpha,\R^d\times \R^+)} \\
   &\lesssim \ell \|y^{\ell-1-\varepsilon}\WW\|_{L^2(y^\alpha,\R^d\times \R^+)}
     + \frac{1}{N^2}\int_{N}^{2N}{\underbrace{y^2}_{\lesssim 4N^2} y^{\alpha + 2\ell -2  -2\varepsilon} \|\WW(y)\|^2_{L^2(\R^d)}\,dy}\\
   &\lesssim \ell\|y^{\ell-1-\varepsilon}\WW\|_{L^2(y^\alpha,\R^d\times \R^+)}
     + \int_{0}^{\infty}{ y^{\alpha +2\ell -2 -2\varepsilon} \|\WW(y)\|^2_{L^2(\R^d)}\,dy},
 \end{align*}
 where we used that $\tilde{\chi}'_N$ vanishes outside of $[N,2N]$. Therefore we can pass to
 the limit $N\rightarrow \infty$ to get the stated result. 
\end{proof}

\begin{remark}
  Note that $\UU$ as solution of \eqref{eq:extension} does not fit Lemma~\ref{lemma:apriori_decay_l2} since it is not in $L^2_{\alpha}(\R^d\times (0,\YY))$. However, the previous lemma can be applied for derivatives of the solution of \eqref{eq:extension}.
\end{remark}

We are now in position to show our main result regarding weighted regularity, Proposition~\ref{prop:regularity}.

\begin{proof}[Proof of Proposition~\ref{prop:regularity}]
  We note that away from $y=0$, we can use standard elliptic regularity theory to show that
  $\UU$ is $C^{\infty}(\R^d\times \R)$ and we can focus on the weighted estimates.
  We prove this by induction, starting with $\ell=1$.
  By differentiating the equation in the form 
  $\fdiv(\mathfrak{A}_x\nabla \UU) + \frac{\alpha}{y} \partial_y \UU = 0$,
  we get that $\WW:=\partial^{\ell}_y \UU$ solves:
  \begin{align}\label{eq:CSextension_diff_y}
  -\fdiv(y^\alpha \mathfrak{A}_x \nabla \WW) = \alpha\sum_{k=0}^{\ell-1}{(-1)^{k}\frac{\ell!}{k!} \frac{ \partial^{k+1}_y\UU}{y^{\ell-k+1-\alpha}}}
  =:F_{\ell}.
  \end{align}
  For $\ell = 1$, we employ Lemma~\ref{lemma:apriori_decay_l2} to obtain
  \begin{align*}
    \norm{y^{1-\varepsilon}\nabla \partial_y \UU}_{L^2(y^{\alpha},\R^d\times(0,\YY))}
    &\lesssim  \norm{y^{-\varepsilon}\partial_y \UU}_{L^2(y^{\alpha},\R^d\times (0,\YY))} +
       \|{y^{2-\varepsilon} y^{-2+\alpha}\partial_y \UU}\|_{L^2(y^{-\alpha},\R^d\times (0,\YY))} \\
   &\lesssim  \norm{y^{-\varepsilon} \partial_y \UU}_{L^2(y^\alpha,\R^d\times (0,\YY))} \lesssim \norm{f}_{L^2(\Omega)},
  \end{align*}
  where in the last step we used Lemma~\ref{lemma:initial_shift}.

  For $\ell>1$, we use the induction assumption valid for $k<\ell$ (that allows to control derivatives up to order $\ell$), which gives
  \begin{align*}
    \big\|{y^{\ell+1-\varepsilon}F_\ell}\big\|_{L^2(y^{-\alpha},\R^d\times(0,\YY))}
    &\lesssim \sum_{k=0}^{\ell-1}\frac{\ell!}{k!}
      \big\|{y^{k-\varepsilon}\partial_y^{k+1} \UU}\big\|_{L^2(y^\alpha,\R^d\times (0,\YY))} \\
   &\lesssim   \ell!\norm{f}_{L^2(\R^d)} \sum_{k=0}^{\ell-1} K^k \\
    &\lesssim \ell! K^\ell\norm{f}_{L^2(\R^d)}.
  \end{align*}
  Using Lemma~\ref{lemma:apriori_decay_l2} together with the induction assumption, we get
  \begin{align*}
    \big\|y^{\ell-\varepsilon} {\nabla \partial_y^\ell \UU}\big\|_{L^2(y^{\alpha},\R^d\times(0,\YY))}
    &\lesssim    \ell \big\|{y^{\ell-1-\varepsilon} \partial_{y}^{\ell} \UU}\big\|_{L^2(y^{\alpha},\R^d\times(0,\YY))} +    \big\|{y^{\ell+1-\varepsilon}F_\ell}\big\|_{L^2(y^{-\alpha},\R^d\times(0,\YY))} \\
    &\lesssim    
      \ell \big\|{y^{\ell-1-\varepsilon} \nabla \partial_{y}^{\ell-1} \UU}\big\|_{L^2(y^{\alpha},\R^d\times(0,\YY))} + \ell!K^\ell\big\|{f}\big\|_{L^2(\R^d)} \\
      &\lesssim \ell!K^{\ell} \norm{f}_{L^2(\R^d)},
  \end{align*}
  which proves the lemma.
\end{proof}

Finally, we provide the proof for the regularity estimates for the $x$-derivatives.

\begin{proof}[Proof of Proposition~\ref{prop:regularity_x}]
In order to obtain estimates for the $x$-derivatives, for a given multi-index $\zeta$, we differentiate the equation with respect to $\partial_x^\zeta$. As the weight $y^\alpha$ remains unchanged, we see that $\WW:=\partial_{x}^\zeta \UU$ solves the extension problem \eqref{eq:extension}
\begin{align*}
 - \fdiv\big(y^\alpha \mathfrak{A}_x \nabla \WW\big) &= F_\zeta \qquad \text{in $\R^{d} \times \R^+$},\\
  d^{-1}_{\beta}\partial_{\nu^\alpha} \WW +s \trace\WW&= f_\zeta \qquad \text{in $\R^{d}$},
\end{align*}
 with data $f_\zeta := \partial_{x}^\zeta f$ and right-hand side
$$
F_\zeta := -\fdiv\Big(y^\alpha\sum_{\zeta' < \zeta} \binom{\zeta}{\zeta'} (\partial_x^{\zeta-\zeta'} \mathfrak{A}_x) \partial_x^{\zeta'} \nabla \UU\Big).
$$
One can modify the arguments of Proposition~\ref{prop:discrete_well_posedness} to also include the source term $(F_\zeta,\WW)_{L^2(\R^d\times\R^+)}$, which can be estimated using 
\begin{align*}
\abs{(F_\zeta,\WW)_{L^2(\R^d\times\R^+)}} \leq \norm{F_\zeta}_{L^2(y^{-\alpha},\R^d\times\R^+)} \norm{\WW}_{L^2(y^{\alpha},\R^d\times\R^+)}.
\end{align*}
This gives 
\begin{align*}
\norm{\nabla \WW}_{L^2(y^\alpha, \R^d\times\R^+)} \lesssim \norm{F_\zeta}_{L^2(y^{-\alpha},\R^d\times\R^+)} + \norm{f_\zeta}_{L^2(\Omega)}.
\end{align*}
Now, an induction argument can be set up as in the proof of Proposition~\ref{prop:regularity} to control $\norm{F_\zeta}_{L^2(y^{-\alpha},\R^d\times\R^+)}$ by $L^2$-norms of derivatives of $f$.
\end{proof}

\bibliographystyle{alphaabbr}
\bibliography{literature}

\end{document}